\newcommand{\vertex}[3]{\node [vertex] (#1) at (#2, #3 * 1.7) {};}
\newcommand{\arc}[2]{{\draw[-latex] (#1) edge (#2);}}
\newcommand{\Tran}{\mathrm{Tran}}
\newcommand{\Sym}{\mathrm{Sym}}
\newcommand{\id}{\mathrm{id}}
\newcommand{\CA}{\mathrm{CA}}
\newcommand{\ICA}{\mathrm{ICA}}
\newcommand{\Rank}{\mathrm{Rank}}
\newcommand{\Sub}{\mathrm{Sub}}
\theoremstyle{plain}
\newtheorem{corollary}{Corollary}
\newtheorem{lemma}{Lemma}
\newtheorem{theorem}{Theorem}
\newtheorem*{claim*}{Claim}
\theoremstyle{definition}
\newtheorem{definition}{Definition}
\newtheorem{problem}{Problem}
\newtheorem{example}{Example}
\newtheorem{remark}{Remark}
\newtheorem{notation}{Notation}
\begin{document}

\title{On Finite Monoids of Cellular Automata}
\author{Alonso Castillo-Ramirez\footnote{Email: \texttt{alonso.castillo-ramirez@durham.ac.uk}} \ and Maximilien Gadouleau\footnote{Email: \texttt{m.r.gadouleau@durham.ac.uk}}  \\ \\ 
\small School of Engineering and Computing Sciences, \\ 
\small Durham University, South Road, \\ 
\small Durham, DH1 3LE \\
\small Telephone: +44 (0) 191 33 41729}
\maketitle

\begin{abstract}
For any group $G$ and set $A$, a cellular automaton over $G$ and $A$ is a transformation $\tau : A^G \to A^G$ defined via a finite neighborhood $S \subseteq G$ (called a memory set of $\tau$) and a local function $\mu : A^S \to A$. In this paper, we assume that $G$ and $A$ are both finite and study various algebraic properties of the finite monoid $\CA(G,A)$ consisting of all cellular automata over $G$ and $A$. Let $\ICA(G;A)$ be the group of invertible cellular automata over $G$ and $A$. In the first part, using information on the conjugacy classes of subgroups of $G$, we give a detailed description of the structure of $\ICA(G;A)$ in terms of direct and wreath products. In the second part, we study generating sets of $\CA(G;A)$. In particular, we prove that $\CA(G,A)$ cannot be generated by cellular automata with small memory set, and, when $G$ is finite abelian, we determine the minimal size of a set $V \subseteq \CA(G;A)$ such that $\CA(G;A) = \langle \ICA(G;A) \cup V \rangle$. 
\end{abstract}


\section{Introduction}

Cellular automata (CA), first introduced by John von Neumann as an attempt to design self-reproducing systems, are models of computation with important applications to computer science, physics, and theoretical biology. In recent years, the theory of CA has been greatly enriched with its connections to group theory and topology (see \cite{CSC10} and references therein). One of the goals of this paper is to embark in the new task of exploring CA from the point of view of finite group and semigroup theory.

We review the broad definition of CA that appears in \cite[Sec.~1.4]{CSC10}. Let $G$ be a group and $A$ a set. Denote by $A^G$ the \emph{configuration space}, i.e. the set of all functions of the form $x:G \to A$. For each $g \in G$, let $R_g : G \to G$ be the right multiplication function, i.e. $(h)R_g := hg$, for any $h \in G$. We emphasise that we apply functions on the right, while in \cite{CSC10} functions are applied on the left.   

\begin{definition} \label{def:ca}
Let $G$ be a group and $A$ a set. A \emph{cellular automaton} over $G$ and $A$ is a transformation $\tau : A^G \to A^G$ such that there is a finite subset $S \subseteq G$, called a \emph{memory set} of $\tau$, and a \emph{local function} $\mu : A^S \to A$ satisfying
\[ (g)(x)\tau = (( R_g \circ x  )\vert_{S}) \mu, \ \forall x \in A^G, g \in G.  \]
\end{definition} 

Most of the classical literature on CA focuses on the case when $G=\mathbb{Z}^d$, for $d\geq1$, and $A$ is a finite set (e.g. see survey \cite{Ka05}).

A \emph{semigroup} is a set $M$ equipped with an associative binary operation. If there exists an element $\id \in M$ such that $\id \cdot m = m \cdot \id = m$, for all $m \in M$, the semigroup $M$ is called a \emph{monoid} and $\id$ an \emph{identity} of $M$. Clearly, the identity of a monoid is always unique.  

Let $\CA(G;A)$ be the set of all cellular automata over $G$ and $A$; by \cite[Corollary 1.4.11]{CSC10}, this set equipped with the composition of functions is a monoid. Although results on monoids of CA have appeared in the literature before (see \cite{CRG15,H12,S15}), the algebraic structure of $\CA(G;A)$ remains basically unknown. In particular, the study of $\CA(G;A)$, when $G$ and $A$ are both finite, has been generally disregarded, perhaps because some of the classical questions are trivially answered (e.g. the Garden of Eden theorems become trivial). However, many new questions, typical of finite semigroup theory, arise in this setting.

In this paper, we study various algebraic properties of $\CA(G;A)$ when $G$ and $A$ are both finite. First, in Section \ref{basic}, we introduce notation and review some basic results. In Section \ref{structure}, we study the group $\ICA(G;A)$ consisting of all invertible CA: we show that its structure is linked with the number of conjugacy classes of subgroups of $G$, and we give an explicit decomposition in terms of direct and wreath products. 

In Section \ref{generating}, we study generating sets of $\CA(G;A)$. We prove that $\CA(G;A)$ cannot be generated by CA with small memory sets: if $T$ generates $\CA(G;A)$, then $T$ must contain a cellular automaton with minimal memory set equal to $G$ itself. This result provides a striking contrast with CA over infinite groups. Finally, when $G$ is finite abelian, we find the smallest size of a set $U \subseteq \CA(G;A)$ such that $\ICA(G;A) \cup U$ generate $\CA(G;A)$; this number is known in semigroup theory as the \emph{relative rank} of $\ICA(G;A)$ in $\CA(G;A)$, and it turns out to be related with the number of edges of the subgroup lattice of $G$.      


\section{Basic Results} \label{basic}

For any set $X$, let $\Tran(X)$ and $\Sym(X)$ be the sets of all functions and bijective functions, respectively, of the form $\tau : X \to X$. Equipped with the composition of functions, $\Tran(X)$ is known as the \emph{full transformation monoid} on $X$, while $\Sym(X)$ is the \emph{symmetric group} on $X$. When $X$ is finite and $\vert X \vert = q$, we write $\Tran_q$ and $\Sym_q$ instead of $\Tran(X)$ and $\Sym(X)$, respectively.   

A \emph{finite transformation monoid} is simply a submonoid of $\Tran_q$, for some $q$. This type of monoids has been extensively studied (e.g. see \cite{GM09} and references therein), and it should be noted its close relation to finite-state machines. 

For the rest of the paper, let $G$ be a finite group of size $n$ and $A$ a finite set of size $q$. By Definition \ref{def:ca}, it is clear that $\CA(G;A) \leq \Tran(A^G)$ (we use the symbol ``$\leq$'' for the submonoid relation). We may always assume that $\tau \in \CA(G;A)$ has (not necessarily minimal) memory set $S = G$, so $\tau$ is completely determined by its local function $\mu: A^G \to A$. Hence, $\vert \CA(G ; A) \vert = q^{q^n}$. 

If $n=1$, then $\CA(G;A) = \Tran(A)$, while, if $q \leq 1$, then $\CA(G;A)$ is the trivial monoid with one element; henceforth, we assume $n \geq 2$ and $q \geq 2$. We usually identify $A$ with the set $\{0, 1, \dots, q-1 \}$. 

The group $G$ acts on the configuration space $A^G$ as follows: for each $g \in G$ and $x \in A^G$, the configuration $x \cdot g \in A^G$ is defined by 
\[ (h)x \cdot g = (hg^{-1})x, \quad \forall h \in G. \]
A transformation $\tau : A^G \to A^G$ is \emph{$G$-equivariant} if, for all $x \in A^G$, $g \in G$,
\[ (x \cdot g) \tau = ( (x) \tau ) \cdot g .\] 

Denote by $\ICA(G;A)$ the group of all invertible cellular automata:
\[ \ICA(G;A) := \{ \tau \in \CA(G;A) : \exists \phi \in \CA(G;A) \text{ such that } \tau \phi = \phi \tau = \id \}. \]

\begin{theorem} \label{AG-finite}
Let $G$ be a finite group and $A$ a finite set.
\begin{description}
\item[(i)] $\CA(G;A) = \{ \tau \in \Tran(A^G) : \tau \text{ is $G$-equivariant} \}$.
\item[(ii)]  $\ICA(G;A) = \CA(G;A) \cap \Sym(A^G)$.
\end{description} 
\end{theorem}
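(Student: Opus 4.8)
The plan is to read (i) as the finite-state version of the Curtis--Hedlund--Lyndon theorem: over a finite $A^G$ the usual continuity hypothesis is automatic, so the statement becomes a pair of direct verifications, and then (ii) follows quickly from finiteness.

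For (i) I would prove the two inclusions separately, after recording the bookkeeping identity $R_g \circ x = x \cdot g^{-1}$ for all $g \in G$ and $x \in A^G$ (both sides send $h \mapsto (hg)x$), which is what reconciles the ``memory set'' description of Definition~\ref{def:ca} with the right action on $A^G$. For ``$\subseteq$'', take $\tau \in \CA(G;A)$ with memory set $S$ and local function $\mu$; then for all $g, h \in G$,
\[ (h)\bigl((x\cdot g)\tau\bigr) = \bigl((R_h \circ (x\cdot g))\vert_S\bigr)\mu = \bigl((R_{hg^{-1}} \circ x)\vert_S\bigr)\mu = (hg^{-1})(x)\tau = (h)\bigl(((x)\tau)\cdot g\bigr), \]
so $\tau$ is $G$-equivariant. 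For ``$\supseteq$'', take a $G$-equivariant $\tau \in \Tran(A^G)$ and propose $S = G$ as a memory set with local function $\mu : A^G \to A$ given by $\mu(y) := (1_G)\bigl((y)\tau\bigr)$, where $1_G$ denotes the identity of $G$; using the identity above and equivariance, $\bigl((R_g \circ x)\vert_G\bigr)\mu = (1_G)\bigl((x \cdot g^{-1})\tau\bigr) = (1_G)\bigl(((x)\tau)\cdot g^{-1}\bigr) = (g)(x)\tau$, which is exactly the defining relation, so $\tau \in \CA(G;A)$.

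For (ii), the inclusion $\ICA(G;A) \subseteq \CA(G;A) \cap \Sym(A^G)$ is immediate: a cellular automaton admitting a two-sided inverse inside $\CA(G;A) \subseteq \Tran(A^G)$ is in particular a bijection of $A^G$. For the reverse inclusion I would invoke finiteness of $A^G$: if $\tau \in \CA(G;A) \cap \Sym(A^G)$ then $\tau$ has finite multiplicative order $k \geq 1$, hence $\tau^{-1} = \tau^{k-1} \in \CA(G;A)$ since $\CA(G;A)$ is a monoid under composition, and $\tau \tau^{k-1} = \tau^{k-1} \tau = \id$. (An alternative not using finiteness: substituting $x = (y)\tau^{-1}$ in the equivariance relation for $\tau$ and then applying $\tau^{-1}$ shows $\tau^{-1}$ is $G$-equivariant, hence lies in $\CA(G;A)$ by part (i).)

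I do not anticipate a genuine obstacle here; the only real pitfall is a misplaced inverse when commuting $R_g$ past the right action $x \cdot g$, which the identity $R_g \circ x = x \cdot g^{-1}$ is designed to keep under control. The conceptual point worth stressing in the write-up is simply that finiteness is what makes both halves work: it trivialises continuity in (i) and it lets one produce the inverse CA as a power of $\tau$ in (ii).
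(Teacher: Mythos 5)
Your proof is correct; every computation checks out (the key identity $R_h \circ (x\cdot g) = R_{hg^{-1}}\circ x$, the choice of $S=G$ with $(y)\mu := (1_G)(y)\tau$, and the finite-order argument for the inverse are all sound). It does, however, take a different route from the paper, which does not argue directly at all: the paper simply cites the Curtis--Hedlund theorem \cite[Theorem 1.8.1]{CSC10} for (i) and \cite[Theorem 1.10.2]{CSC10} for (ii). Those general results characterise cellular automata over arbitrary groups as the \emph{continuous} $G$-equivariant maps for the prodiscrete topology and show that a bijective CA has a CA inverse; the paper's proof implicitly relies on the observation that continuity is automatic when $A^G$ is finite (the topology is discrete). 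Your argument replaces this machinery with a self-contained verification that makes explicit exactly where finiteness enters: finiteness of $G$ lets you take the whole group as a memory set in the ``$\supseteq$'' direction of (i), and finiteness of $A^G$ lets you realise $\tau^{-1}$ as the power $\tau^{k-1}$ inside the monoid in (ii). What the citation buys is brevity and a clear link to the infinite theory; what your proof buys is independence from \cite{CSC10} and a transparent account of why the finite case is easy. One cosmetic remark: you write $\mu(y)$ at one point, whereas the paper consistently applies functions on the right, i.e. $(y)\mu$.
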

\begin{proof}
The first part follows by Curtis-Hedlund Theorem (see \cite[Theorem 1.8.1]{CSC10}) while the second part follows by \cite[Theorem 1.10.2]{CSC10}. \qed
\end{proof}

\begin{notation} \label{notation-orbits}
For any $x \in A^G$, denote by $xG$ the \emph{$G$-orbit} of $x$ on $A^G$:
\[ xG := \{ x \cdot g : g \in G  \}. \]
Let $\mathcal{O}(G;A)$ be the set of all $G$-orbits on $A^G$:
\[ \mathcal{O}(G;A) := \{ xG : x \in A^G\}.\] 
\end{notation}

Clearly, $\mathcal{O}(G;A)$ forms a partition of $A^G$. In general, when $X$ is a set and $\mathcal{P}$ is a partition of $X$, we say that a transformation monoid $M \leq \Tran(X)$ \emph{preserves the partition} if, for any $P \in \mathcal{P}$ and $\tau \in M$ there is $Q \in \mathcal{P}$ such that $(P)\tau \subseteq Q$.

\begin{lemma} \label{preserve}
For any $x \in A^G$ and $\tau \in \CA(G;A)$,
\[ (xG) \tau = (x)\tau G. \]
In particular, $\CA(G;A)$ preserves the partition $\mathcal{O}(G;A)$ of $A^G$.
\end{lemma}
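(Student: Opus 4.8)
The plan is to reduce the whole statement to the $G$-equivariance of cellular automata recorded in Theorem~\ref{AG-finite}(i). First I would fix $\tau \in \CA(G;A)$ and recall that, being $G$-equivariant, $\tau$ satisfies $(x \cdot g)\tau = ((x)\tau) \cdot g$ for every $x \in A^G$ and every $g \in G$. Applying $\tau$ elementwise to the orbit $xG = \{ x\cdot g : g \in G\}$ then gives $(xG)\tau = \{(x\cdot g)\tau : g\in G\} = \{((x)\tau)\cdot g : g \in G\}$, and the last set is, by Notation~\ref{notation-orbits}, precisely the orbit $(x)\tau G$. This yields the displayed equality $(xG)\tau = (x)\tau G$.

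For the ``in particular'' clause I would simply observe that, by definition, $(x)\tau G \in \mathcal{O}(G;A)$; hence the equality just proved shows that the image under $\tau$ of the block $xG$ of $\mathcal{O}(G;A)$ is contained in (in fact equal to) a single block of $\mathcal{O}(G;A)$. Since every block of the partition $\mathcal{O}(G;A)$ has the form $xG$ for some $x \in A^G$, and $\tau \in \CA(G;A)$ was arbitrary, this is exactly the assertion that $\CA(G;A)$ preserves the partition $\mathcal{O}(G;A)$.

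There is essentially no obstacle here: the only substantive ingredient is Theorem~\ref{AG-finite}(i), which itself rests on the Curtis--Hedlund theorem. Should a self-contained argument be preferred, one could instead verify $G$-equivariance directly from Definition~\ref{def:ca}: unwinding the action, both $R_h \circ (x\cdot g)$ and $R_{hg^{-1}} \circ x$ are the map $s \mapsto (shg^{-1})x$ on $S$, so $(h)(x\cdot g)\tau = (h)\bigl((x)\tau \cdot g\bigr)$ for all $h \in G$. However, invoking the already-established theorem keeps the proof short, and I would present it that way.
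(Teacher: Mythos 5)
Your proof is correct and follows essentially the same route as the paper, which simply cites the $G$-equivariance of $\tau$ (Theorem~\ref{AG-finite}(i)) and leaves the elementwise computation $(xG)\tau = \{(x\cdot g)\tau\} = \{((x)\tau)\cdot g\} = (x)\tau G$ implicit. Your version just spells out that computation and the ``in particular'' clause, and the optional direct verification of equivariance from Definition~\ref{def:ca} is also accurate.
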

\begin{proof}
The result follows by the $G$-equivariance of $\tau \in \CA(G;A)$.   \qed
\end{proof}

 A configuration $x \in A^G$ is called \emph{constant} if $(g)x = k \in A$, for all $g \in G$. In such case, we usually denote $x$ by $\mathbf{k} \in A^G$. 

\begin{lemma} \label{constant-config}
 Let $\tau \in \CA(G;A)$ and let $\mathbf{k} \in A^G$ be a constant configuration. Then, $(\mathbf{k}) \tau \in A^G$ is a constant configuration.
\end{lemma}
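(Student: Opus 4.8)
The plan is to characterise constant configurations purely in terms of the $G$-action on $A^G$, and then to invoke the $G$-equivariance of $\tau$ (equivalently, Lemma \ref{preserve}) to finish.

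The first step is to observe that a configuration $x \in A^G$ is constant if and only if it is a fixed point of the $G$-action, i.e. $x \cdot g = x$ for all $g \in G$; equivalently, if and only if its orbit $xG$ is the singleton $\{x\}$. The ``only if'' direction is immediate from the definition of the action: if $x = \mathbf{k}$, then $(h)(x \cdot g) = (hg^{-1})x = k = (h)x$ for all $g, h \in G$, so $x \cdot g = x$. For the ``if'' direction, given $h_1, h_2 \in G$, note that $x \cdot g = x$ says $(hg^{-1})x = (h)x$ for every $h \in G$; applying this with $h = h_1$ and $g = h_2^{-1} h_1$ (so that $h_1 g^{-1} = h_2$) yields $(h_2)x = (h_1 g^{-1})x = (h_1)x$, whence $x$ is constant.

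With this in hand the conclusion is essentially immediate. By Lemma \ref{preserve}, $(\mathbf{k}G)\tau = (\mathbf{k})\tau\, G$. Since $\mathbf{k}$ is constant, $\mathbf{k}G = \{\mathbf{k}\}$, so the left-hand side is the singleton $\{(\mathbf{k})\tau\}$; hence $(\mathbf{k})\tau\, G = \{(\mathbf{k})\tau\}$, i.e. the $G$-orbit of $(\mathbf{k})\tau$ is a singleton, and therefore $(\mathbf{k})\tau$ is constant by the characterisation above. Alternatively, one can bypass Lemma \ref{preserve} and argue directly from Theorem \ref{AG-finite}(i): the $G$-equivariance of $\tau$ gives $(\mathbf{k})\tau \cdot g = (\mathbf{k} \cdot g)\tau = (\mathbf{k})\tau$ for all $g \in G$, so $(\mathbf{k})\tau$ is a fixed point of the action, hence constant.

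There is no real obstacle here: the only piece of actual content is the elementary fixed-point characterisation of constant configurations, which is a short group computation, and everything else is a direct appeal to $G$-equivariance. The one point to be careful about is that, since functions act on the right, the inverse must appear in the correct place in the action $x \cdot g$.
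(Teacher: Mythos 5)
Your proof is correct and follows essentially the same route as the paper: the paper also observes that $x$ is constant if and only if $x \cdot g = x$ for all $g \in G$, and then applies $G$-equivariance to get $(\mathbf{k})\tau \cdot g = (\mathbf{k} \cdot g)\tau = (\mathbf{k})\tau$, which is exactly your ``alternative'' direct argument. Your orbit-based phrasing via Lemma \ref{preserve} and the fully written-out fixed-point characterisation are just more detailed versions of the same idea.
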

\begin{proof}
Observe that $x \in A^G$ is constant if and only if  $x \cdot g = x$, for all $g \in G$. By $G$-equivariance,
\[ (\mathbf{k}) \tau =  (\mathbf{k} \cdot g) \tau =  (\mathbf{k}) \tau \cdot g,  \quad \forall g \in G. \]
Hence, $(\mathbf{k})\tau$ is constant. \qed 
\end{proof}

For a monoid $M$ and a subset $T \subseteq M$, denote by $C_M(T)$ the \emph{centraliser} of $T$ in $M$:
\[ C_M(T) := \{ m \in M : mt=tm, \forall t \in T \}. \]

If $G$ is abelian, the transformation $\sigma_g : A^G \to A^G$, with $g \in G$, defined by
\[ (x) \sigma_g :=  x\cdot g , \quad \forall x \in A^G, \]
is in $\CA(G;A)$. It follows by Theorem \ref{AG-finite} that $CA(G;A) = C_{\Tran(A^G)}(T)$, where $T :=\{ \sigma_g : g \in G \}$. 

We use the cyclic notation for the permutations of $\Sym(A^G)$. If $B \subseteq A^G$ and $a \in A^G$, we define the idempotent transformation $(B \to a) \in \Tran(A^G)$ by
\[ (x)(B \to a) := \begin{cases}
a & \text{ if } x \in B, \\
x & \text{ otherwise},  
 \end{cases} \quad \forall x \in A^G. \]
When $B=\{ b\}$ is a singleton, we write $(b \to a)$ instead of $(\{ b\} \to a)$.


\section{The Structure of $\ICA(G;A)$} \label{structure}

 Let $G$ be a finite group of size $n \geq 2$ and $A$ a finite set of size $q \geq 2$. We review few basic concepts about permutation groups (see \cite[Ch. 1]{DM96}). For $x \in A^G$, denote by $G_x$ the \emph{stabiliser} of $x$ in $G$:
\[ G_x : = \{g \in G : x \cdot g = x \}.\]

\begin{remark} \label{rk:subgroups}
For any subgroup $H \leq G$ there exists $x \in A^G$ such that $G_x = H$; namely, we may define $x : G \to A$ by
\[ (g)x := \begin{cases}
1 & \text{if } g \in H, \\
0 & \text{otherwise},
\end{cases} \quad \forall g \in G. \]
\end{remark}

Say that two subgroups $H_1$ and $H_2$ of $G$ are \emph{conjugate} in $G$ if there exists $g \in G$ such that $g^{-1} H_1 g = H_2$. This defines an equivalence relation on the subgroups of $G$. Denote by $[H]$ the conjugacy class of $H \leq G$. 

We say that the actions of $G$ on two sets $\Omega$ and $\Gamma$ are \emph{equivalent} if there is a bijection $\lambda : \Omega \to \Gamma$ such that, for all $x \in \Omega, g \in G$, we have $(x \cdot g )\lambda = (x)\lambda \cdot g$.  

The following is an essential result for our description of the structure of the group of invertible cellular automata.

\begin{lemma}\label{conjugate}
Let $G$ be a finite group of size $n \geq 2$ and $A$ a finite set of size $q \geq 2$. For any $x,y \in A^G$, there exists $\tau \in \ICA(G;A)$ such that $(xG)\tau = yG$ if and only if $[G_x] = [ G_y]$.
\end{lemma}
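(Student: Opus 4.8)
The plan is to prove both directions by translating statements about $G$-orbits on $A^G$ into statements about the stabiliser subgroups, using the standard orbit--stabiliser correspondence together with part (ii) of Theorem~\ref{AG-finite}, which tells us that $\ICA(G;A)$ is exactly the group of $G$-equivariant \emph{bijections} of $A^G$.

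For the forward direction, suppose $\tau \in \ICA(G;A)$ satisfies $(xG)\tau = yG$. By Lemma~\ref{preserve}, $(xG)\tau = (x)\tau\, G$, so $(x)\tau$ lies in the orbit $yG$; write $(x)\tau = y \cdot g$ for some $g \in G$. Since $\tau$ is $G$-equivariant, a configuration $z$ and its image $(z)\tau$ always have the same stabiliser: indeed $(z \cdot h)\tau = (z)\tau \cdot h$, so $h \in G_z \iff h \in G_{(z)\tau}$. Hence $G_x = G_{(x)\tau} = G_{y \cdot g}$. A direct computation gives $G_{y\cdot g} = g^{-1} G_y\, g$ (if $y \cdot g \cdot h = y \cdot g$ then $y \cdot (ghg^{-1}) = y$). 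Therefore $G_x = g^{-1} G_y g$, so $[G_x] = [G_y]$.

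For the converse, suppose $[G_x] = [G_y]$. Replacing $y$ by a suitable translate $y \cdot g$ (which does not change the orbit $yG$ and conjugates $G_y$), we may assume outright that $G_x = G_y =: H$. Now the orbit $xG$ is in $G$-equivariant bijection with the coset space $H\backslash G$ (acting on the right), and likewise for $yG$; composing, we obtain a $G$-equivariant bijection $\lambda_0 : xG \to yG$, i.e.\ an equivalence of the two $G$-actions, sending $x \mapsto y$. The task is then to extend $\lambda_0$ to a $G$-equivariant bijection $\tau$ of all of $A^G$. To do this, partition $A^G$ into $G$-orbits; the orbits come in ``equivalence types'' according to the conjugacy class of their stabilisers, and $xG$ and $yG$ have the same type. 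Pair up $xG$ with $yG$ via $\lambda_0$, pair every other orbit with itself via the identity, and define $\tau$ orbit-by-orbit. Since each piece is $G$-equivariant and the orbits partition $A^G$ (Lemma~\ref{preserve}), the resulting $\tau$ is a $G$-equivariant bijection, hence $\tau \in \ICA(G;A)$ by Theorem~\ref{AG-finite}(ii), and by construction $(xG)\tau = yG$.

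The only genuinely delicate point is the claim that two $G$-orbits with conjugate stabilisers are equivalent as $G$-sets and that one can build an \emph{explicit} equivariant bijection between them taking a prescribed point to a prescribed point; this is the orbit--stabiliser theorem applied with care about whether we quotient on the left or the right (the paper uses right actions, so orbits look like $H\backslash G$), but it is entirely routine. Everything else — that equivariant maps preserve stabilisers, that $G_{y\cdot g} = g^{-1}G_y g$, and that a map defined compatibly on the blocks of a $G$-stable partition is globally equivariant and bijective — is immediate. So I expect no real obstacle; the proof is essentially bookkeeping around Theorem~\ref{AG-finite}(ii) and Lemma~\ref{preserve}.
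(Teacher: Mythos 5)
Your proof is correct and follows essentially the same route as the paper's: both directions come down to the standard fact that two $G$-orbits are equivalent as $G$-sets if and only if their stabilisers are conjugate (the paper simply cites \cite[Lemma 1.6B]{DM96} where you redo it via cosets, and in the forward direction you compute $G_x = g^{-1}G_yg$ directly instead), and the invertible cellular automaton is obtained by swapping $xG$ with $yG$ equivariantly and fixing every other orbit, exactly as in the paper. Two small points to make explicit: the inclusion $G_{(z)\tau}\subseteq G_z$ uses the injectivity of $\tau$, and on $yG$ your $\tau$ must act by $\lambda_0^{-1}$ rather than the identity so that the result is a bijection.
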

\begin{proof}
By \cite[Lemma 1.6B]{DM96}, the actions of $G$ on $xG$ and $yG$ are equivalent if and only if $G_x$ and $G_y$ are conjugate in $G$. We claim that the actions of $G$ on $xG$ and $yG$ are equivalent if and only if there is $\tau \in \ICA(G;A)$ such that $(xG) \tau = yG$. Assume such $\tau \in \ICA(G;A)$ exists. Then, the restriction $\lambda := \tau \vert_{xG} : xG \to yG$ is the bijection required to show that the actions of $G$ on $xG$ and $yG$ are equivalent. Conversely, suppose there is a bijection $\lambda : xG \to yG$ such that $(z \cdot g )\lambda = (z)\lambda \cdot g$, for all $z \in xG$, $g \in G$. Define $\tau : A^G \to A^G$ by
\[ (z)\tau := \begin{cases}
(z)\lambda & \text{if } z \in xG, \\
(z)\lambda^{-1} & \text{if } z \in yG, \\
z & \text{otherwise},
\end{cases} \quad \forall z \in A^G. \]   
Clearly, $\tau$ is $G$-equivariant and invertible (in fact, $\tau = \tau^{-1}$). Hence $\tau \in \ICA(G;A)$, and it satisfies $(xG)\tau = yG$.  \qed
\end{proof}

\begin{corollary}
Suppose that $G$ is a finite abelian group. For any $x,y \in A^G$, there exists $\tau \in \ICA(G;A)$ such that $(xG)\tau = yG$ if and only if $G_x = G_y$.
\end{corollary}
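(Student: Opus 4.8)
The plan is to derive this directly from Lemma~\ref{conjugate}, using only the elementary fact that abelian groups have no nontrivial conjugation of subgroups. Concretely, Lemma~\ref{conjugate} already tells us that for $x,y \in A^G$ there exists $\tau \in \ICA(G;A)$ with $(xG)\tau = yG$ if and only if $[G_x] = [G_y]$, i.e.\ if and only if $G_x$ and $G_y$ are conjugate in $G$. So it suffices to show that, when $G$ is abelian, two subgroups are conjugate in $G$ precisely when they are equal.

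For the latter: let $H \leq G$ be any subgroup and $g \in G$. Since $G$ is abelian, $g^{-1} h g = h$ for every $h \in H$, hence $g^{-1} H g = H$. Thus the conjugacy class $[H]$ is the singleton $\{H\}$, and in particular $[G_x] = [G_y]$ holds if and only if $G_x = G_y$. Combining this equivalence with Lemma~\ref{conjugate} yields the statement. I do not expect any genuine obstacle here; the only point requiring a word of care is simply invoking Lemma~\ref{conjugate} in the correct direction (both implications), after which the argument is immediate.
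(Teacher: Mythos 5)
Your proof is correct and is exactly the intended argument: the paper states this as an immediate corollary of Lemma~\ref{conjugate}, relying precisely on the observation that in an abelian group every subgroup is its own conjugacy class, so $[G_x]=[G_y]$ reduces to $G_x=G_y$. Nothing further is needed.
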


For any integer $\alpha \geq 2$ and any group $C$, the \emph{wreath product} of $C$ by $\Sym_\alpha$ is the set
\[ C \wr \Sym_{\alpha} := \{ (v; \phi) : v \in C ^\alpha, \phi \in \Sym_\alpha \} \]
equipped with the operation
\[ (v;\phi) \cdot (w; \psi) = ( v w^{\phi}; \phi \psi), \text{ for any } v,w \in C^\alpha, \phi, \psi \in \Sym_\alpha \]
where $\phi$ acts on $w$ by permuting its coordinates:
\[  w^\phi = (w_1, w_2, \dots, w_\alpha)^\phi := (w_{(1)\phi}, w_{(2)\phi}, \dots, w_{(\alpha)\phi}). \]
See \cite[Sec. 2.6]{DM96} for a more detailed description of the wreath product.

\begin{notation} \label{C(GO)-notation}
Let $O \in \mathcal{O}(G;A)$ be a $G$-orbit on $A^G$. If $G_{(O)}$ is the pointwise stabiliser of $O$, i.e. $G_{(O)} := \bigcap_{x \in O} G_x$, then $G^O := G / G_{(O)}$ is a group that is isomorphic to a subgroup of $\Sym(O)$ (see \cite[p. 17]{DM96}). Consider the group
\begin{equation} \label{centraliser}
C(G^O) := \{ \tau \vert_{O} : O \to A^G : \tau \in \ICA(G;A) \text{ and } (O)\tau = O  \}.  
\end{equation}
By Theorem \ref{AG-finite}, $C(G^O)$ is isomorphic to the centraliser of $G^O$ in $\Sym(O)$:
\[ C(G^O) \cong C_{\Sym(O)}(G^O).\]
\end{notation}

\begin{notation}\label{alpha-notation}
Let $H$ be a subgroup of $G$ and $[H]$ its conjugacy class. Define 
\[ B_{[H]} := \{ x \in A^G : G_x \in [H] \}. \]
Note that $B_{[H]}$ is a union of $G$-orbits and, by the Orbit-Stabiliser Theorem (see \cite[Theorem 1.4A]{DM96}), all the $G$-orbits contained in $B_{[H]}$ have equal sizes. Define 
\[ \alpha_{[H]} (G; A) := \left\vert \left\{ O \in \mathcal{O}(G,A) : O \subseteq B_{[H]}   \right\} \right\vert.  \]
If $r$ is the number of different conjugacy classes of subgroups of $G$, observe that 
\[ \mathcal{B} := \{ B_{[H]} : H \leq G \}\]
is a partition of $A^G$ with $r$ blocks. 
\end{notation}

\begin{remark}
$B_{[G]} = \{ x \in A^G : x \text{ is constant} \}$ and $\alpha_{[G]} (G; A) = q$.
\end{remark}

\begin{example} \label{ex:klein}
Let $G = \mathbb{Z}_2 \times \mathbb{Z}_2$ be the Klein four-group and $A= \{ 0, 1\}$. As $G$ is abelian, $[H] = \{ H\}$, for all $H \leq G$. The subgroups of $G$ are 
\[ H_1 = G, \ H_2 = \langle (1,0) \rangle, \ H_3 = \langle (0,1) \rangle, \ H_4 = \langle (1,1) \rangle, \ \text{and} \ H_5 = \langle (0,0) \rangle, \]
where $\langle (a,b) \rangle$ denotes the subgroup generated by $(a,b) \in G$. Any configuration $x : G \to A$ may be written as a $2 \times 2$ matrix $(x_{i,j})$ where $x_{i,j} := (i-1,j-1)x$, $i,j \in \{1,2 \}$. The $G$-orbits on $A^G$ are
\begin{align*}
& O_1  := \left\{  \left( \begin{tabular}{cc}
$0$ \ & \ $0$ \\
 $0$ \  & \ $0$
 \end{tabular} \right) \right\}, \  \ O_2 := \left\{ \left( \begin{tabular}{cc}
$1$ \ & \ $1$ \\
 $1$ \  & \ $1$
 \end{tabular} \right) \right\}, \ \ O_3  :=  \left\{  \left( \begin{tabular}{cc}
$1$ \ & \ $0$ \\
 $1$ \  & \ $0$
 \end{tabular} \right), \left( \begin{tabular}{cc}
$0$ \ & \ $1$ \\
 $0$ \  & \ $1$
 \end{tabular} \right) \right\}, \\[.5em]
 & O_4  :=  \left\{ \left( \begin{tabular}{cc}
$1$ \ & \ $1$ \\
 $0$ \  & \ $0$
 \end{tabular} \right), \left( \begin{tabular}{cc}
$0$ \ & \ $0$ \\
 $1$ \  & \ $1$
 \end{tabular} \right)  \right\}, \  \ O_5  :=  \left\{ \left( \begin{tabular}{cc}
$1$ \ & \ $0$ \\
 $0$ \  & \ $1$
 \end{tabular} \right), \left( \begin{tabular}{cc}
$0$ \ & \ $1$ \\
 $1$ \  & \ $0$
 \end{tabular} \right)   \right\} \\[.5em]
& O_6  :=  \left\{ \left( \begin{tabular}{cc}
$1$ \ & \ $0$ \\
 $0$ \  & \ $0$
 \end{tabular} \right), \left( \begin{tabular}{cc}
$0$ \ & \ $1$ \\
 $0$ \  & \ $0$
 \end{tabular} \right), \left( \begin{tabular}{cc}
$0$ \ & \ $0$ \\
 $0$ \  & \ $1$
 \end{tabular} \right), \left( \begin{tabular}{cc}
$0$ \ & \ $0$ \\
 $1$ \  & \ $0$
 \end{tabular} \right) \right\},  \\[.5em] 
& O_7  :=  \left\{ \left( \begin{tabular}{cc}
$0$ \ & \ $1$ \\
 $1$ \  & \ $1$
 \end{tabular} \right), \left( \begin{tabular}{cc}
$1$ \ & \ $0$ \\
 $1$ \  & \ $1$
 \end{tabular} \right), \left( \begin{tabular}{cc}
$1$ \ & \ $1$ \\
 $1$ \  & \ $0$
 \end{tabular} \right), \left( \begin{tabular}{cc}
$1$ \ & \ $1$ \\
 $0$ \  & \ $1$
 \end{tabular} \right) \right\}.
\end{align*}
Hence,
\begin{align*}
& B_{[H_1]}:=O_1 \cup O_2, \ B_{[H_2]}:=O_3, \ B_{[H_3]}:=O_4, \ B_{[H_4]}:=O_5, \ B_{[H_5]}:=O_6 \cup O_7; \\
& \alpha_{[H_i]}(G;A) = 2, \text { for } i \in \{1,5 \}, \text { and } \alpha_{[H_i]}(G;A) = 1, \text{ for } i \in \{ 2,3,4 \}.
\end{align*}
\end{example}

\begin{remark}
By Lemma \ref{conjugate}, the $\ICA(G;A)$-orbits on $A^G$ coincide with the blocks in $\mathcal{B}$, while the $\ICA(G;A)$-blocks of imprimitivity on each $B_{[H]}$ are the $G$-orbits contained in $B_{[H]}$.
\end{remark}

The following result is a refinement of \cite[Theorem 9]{S15} and \cite[Lemma 4]{CRG15}.

\begin{theorem} \label{th:ICA}
Let $G$ be a finite group and $A$ a finite set of size $q \geq 2$. Let $[H_1], \dots, [H_r]$ be the list of different conjugacy classes of subgroups of $G$. For each $1 \leq i \leq r$, fix a $G$-orbit $O_i \subseteq B_{[H_i]}$. Then,
\[ \ICA(G;A) \cong \prod_{i=1}^r \left( C_i \wr \Sym_{\alpha_i} \right), \]
where $C_i := C(G^{O_i}) \cong C_{\Sym(O_i)}(G^{O_i})$ and $\alpha_i := \alpha_{[H_i]}(G;A)$.
\end{theorem}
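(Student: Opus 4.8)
The plan is to decompose $\ICA(G;A)$ first as a direct product indexed by the blocks $B_{[H_i]}$ of the partition $\mathcal{B}$, and then to identify each direct factor as a wreath product $C_i \wr \Sym_{\alpha_i}$.

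\textbf{Reduction to a single block.} By Lemma \ref{conjugate} the sets $B_{[H_1]}, \dots, B_{[H_r]}$ are exactly the orbits of $\ICA(G;A)$ on $A^G$, so every $\tau \in \ICA(G;A)$ maps each $B_{[H_i]}$ bijectively onto itself and hence restricts to a $G$-equivariant permutation $\tau\vert_{B_{[H_i]}}$ of it. Writing $M_i$ for the group of \emph{all} $G$-equivariant permutations of $B_{[H_i]}$, the map $\tau \mapsto (\tau\vert_{B_{[H_i]}})_{i=1}^r$ is a group homomorphism $\ICA(G;A) \to \prod_{i=1}^r M_i$. It is injective because $\mathcal{B}$ covers $A^G$, and I would prove surjectivity by noting that $G$-equivariance is a condition imposed orbit by orbit: any family of $G$-equivariant permutations $\sigma_i$ of the pairwise disjoint, $G$-invariant blocks $B_{[H_i]}$ glues to a single $G$-equivariant permutation of $A^G$, which lies in $\ICA(G;A)$ by Theorem \ref{AG-finite}(ii). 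Thus $\ICA(G;A) \cong \prod_{i=1}^r M_i$, and it remains to show $M_i \cong C_i \wr \Sym_{\alpha_i}$.

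\textbf{Coordinatising $M_i$.} Fix $i$ and list the $\alpha_i := \alpha_{[H_i]}(G;A)$ $G$-orbits inside $B_{[H_i]}$ as $O^{(1)} := O_i, O^{(2)}, \dots, O^{(\alpha_i)}$. For each $j$, Lemma \ref{conjugate} supplies $\tau_j \in \ICA(G;A)$ with $(O_i)\tau_j = O^{(j)}$, and I fix $\lambda_j := \tau_j\vert_{O_i} : O_i \to O^{(j)}$, a $G$-equivariant bijection, taking $\lambda_1 = \id$. By Lemma \ref{preserve} every $\sigma \in M_i$ permutes the $G$-orbits contained in $B_{[H_i]}$, inducing $\phi_\sigma \in \Sym_{\alpha_i}$ with $(O^{(j)})\sigma = O^{((j)\phi_\sigma)}$; and for each $j$ the composite $v_j := \lambda_j \,(\sigma\vert_{O^{(j)}})\, \lambda_{(j)\phi_\sigma}^{-1}$ is a $G$-equivariant permutation of $O_i$, hence lies in $C_i = C(G^{O_i})$, since any $G$-equivariant permutation of $O_i$ extends by the identity on $A^G \setminus O_i$ to an element of $\ICA(G;A)$ fixing $O_i$. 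I would then check that $\sigma \mapsto (v_1, \dots, v_{\alpha_i}; \phi_\sigma)$ is a bijection $M_i \to C_i^{\alpha_i} \times \Sym_{\alpha_i}$, the inverse sending $(v;\phi)$ to the permutation whose restriction to $O^{(j)}$ equals $\lambda_j^{-1}\, v_j\, \lambda_{(j)\phi}$.

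\textbf{Matching the operation and the main obstacle.} Finally I would verify that this bijection respects the operations: for $\sigma \leftrightarrow (v;\phi)$ and $\tau \leftrightarrow (w;\psi)$ the orbit permutation induced by $\sigma\tau$ is $\phi\psi$, and inserting $\lambda_{(j)\phi}^{-1}\lambda_{(j)\phi} = \id$ between $\sigma\vert_{O^{(j)}}$ and $\tau\vert_{O^{((j)\phi)}}$ shows the $j$-th coordinate of $\sigma\tau$ is $v_j\, w_{(j)\phi}$, so $\sigma\tau \leftrightarrow (v w^\phi; \phi\psi) = (v;\phi)(w;\psi)$. This gives $M_i \cong C_i \wr \Sym_{\alpha_i}$ and, with the reduction step, the theorem; here $\Sym_{\alpha_i}$ is read as the trivial group when $\alpha_i = 1$, and for $[H_i] = [G]$ one has $C_i$ trivial and $\alpha_i = q$, recovering the $\Sym_q$ factor coming from the constant configurations. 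The only delicate step is this last computation: since functions are applied on the right and $\Sym_{\alpha_i}$ must act on a fixed side in the wreath product, I expect the main effort to be bookkeeping the order of composition and the placement of the $\lambda_j$'s so that the cocycle genuinely emerges as $v w^\phi$ rather than $v^\psi w$ or similar. Choosing the $\lambda_j$ once and for all before substituting is what should make everything close up cleanly.
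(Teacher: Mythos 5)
Your proposal is correct and follows essentially the same route as the paper: decompose $\ICA(G;A)$ as a direct product over the blocks $B_{[H_i]}$ (the $\ICA$-orbits, via Lemma \ref{conjugate}), then identify each factor as a wreath product with base group $C_i$ and top group $\Sym_{\alpha_i}$. The only difference is one of detail: where the paper cites \cite[Lemma 2.1(iv)]{AS09} for $S(B_i,\mathcal{O}_i)\cong\Sym(O_i)\wr\Sym_{\alpha_i}$ and then cuts down to $C_i\wr\Sym_{\alpha_i}$ rather tersely, you build the isomorphism explicitly via the fixed $G$-equivariant bijections $\lambda_j$, which makes the step that the base coordinates land in (and fill out all of) $C_i$ fully explicit; your cocycle computation and handling of the right-action conventions are correct.
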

\begin{proof}
Let $B_i := B_{[H_i]}$. By Lemma \ref{conjugate}, $\ICA(G;A)$ is contained in the group 
\[  \prod_{i=1}^r \Sym(B_i) = \Sym(B_1) \times \Sym(B_2) \times \dots \times \Sym(B_r). \]
For each $1 \leq i \leq r$, let $\mathcal{O}_i$ be the set of $G$-orbits contained in $B_i$ (so $O_i \in \mathcal{O}_i$). Note that $\mathcal{O}_i$ is a uniform partition of $B_i$. For any $\tau \in \ICA(G;A)$, Lemma \ref{preserve} implies that the projection of $\tau$ to $\Sym(B_i)$ is contained in
\[ S(B_i, \mathcal{O}_i ) := \{ \phi \in \Sym(B_i) : \forall P \in \mathcal{O}_i, \  (P)\phi \in \mathcal{O}_i \}.  \]
By \cite[Lemma 2.1(iv)]{AS09}, 
\[ S(B_i , \mathcal{O}_i ) \cong  \Sym(O_i) \wr \Sym_{\alpha_i}.    \]
It is well-known that $\Sym_{\alpha_i}$ is generated by its transpositions. As the invertible cellular automaton constructed in the proof of Lemma \ref{conjugate} induces a transposition $(xG,yG) \in \Sym_{\alpha_i}$, with $xG, yG \in \mathcal{O}_i$, we deduce that $\Sym_{\alpha_i} \leq \ICA(G;A)$. The result follows by the construction of $C_i \cong C_{\Sym(O_i)}(G^{O_i})$ and Theorem \ref{AG-finite}. \qed  
\end{proof}

\begin{corollary} \label{cor:structure}
Let $G$ be a finite abelian group and $A$ a finite set of size $q\geq 2$. Let $H_1, \dots, H_r$ be the list of different subgroups of $G$. Then,
\[ \ICA(G;A) \cong \prod_{i=1}^r \left( (G/H_i) \wr \Sym_{\alpha_i} \right), \]
and $\vert G \vert \alpha_i  = \vert H_i \vert \cdot \vert \{ x \in A^G : G_x = H_i \} \vert$, where $\alpha_i := \alpha_{[H_i]}(G;A)$.
\end{corollary}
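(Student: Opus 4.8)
The plan is to deduce Corollary~\ref{cor:structure} directly from Theorem~\ref{th:ICA} by specialising to the abelian case, so the work splits into three small verifications. First, since $G$ is abelian, conjugation is trivial, so every conjugacy class $[H]$ is the singleton $\{H\}$ and the list $[H_1],\dots,[H_r]$ becomes the list $H_1,\dots,H_r$ of \emph{all} subgroups of $G$; consequently $B_{[H_i]} = \{x \in A^G : G_x = H_i\}$ exactly. Second, I must identify the factor groups $C_i = C(G^{O_i}) \cong C_{\Sym(O_i)}(G^{O_i})$. Fix $O_i = xG$ with $G_x = H_i$. Because $G$ is abelian, the pointwise stabiliser $G_{(O_i)} = \bigcap_{g\in G} G_{x\cdot g}$ equals $G_x = H_i$ (the stabiliser of $x\cdot g$ is $gG_xg^{-1} = G_x$), so $G^{O_i} = G/G_{(O_i)} = G/H_i$ acts on $O_i$. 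This action is regular: it is transitive by definition of an orbit, and the stabiliser in $G/H_i$ of the point $x \in O_i$ is trivial since $G_x = H_i$. The centraliser in $\Sym(O_i)$ of a regular representation of a group $K$ is isomorphic to $K$ itself (the right-regular representation centralises the left-regular one and vice versa; this is standard, e.g.\ from \cite[Ch.~1]{DM96}). Taking $K = G/H_i$, which is itself abelian, gives $C_i \cong G/H_i$. Substituting into Theorem~\ref{th:ICA} yields the claimed isomorphism $\ICA(G;A) \cong \prod_{i=1}^r \bigl((G/H_i)\wr \Sym_{\alpha_i}\bigr)$.

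For the counting formula, I would argue as follows. The block $B_{[H_i]}$ is by definition the union of the $\alpha_i$ distinct $G$-orbits it contains, and each such orbit has the same size, namely $|G^{O_i}| = |G/H_i| = |G|/|H_i|$ by the Orbit--Stabiliser Theorem (the remark preceding Notation~\ref{alpha-notation} already records that all orbits inside $B_{[H]}$ are equinumerous). Hence
\[
\bigl|\{x \in A^G : G_x = H_i\}\bigr| = |B_{[H_i]}| = \alpha_i \cdot \frac{|G|}{|H_i|},
\]
which rearranges to $|G|\,\alpha_i = |H_i|\cdot |\{x \in A^G : G_x = H_i\}|$, as required.

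I do not anticipate a genuine obstacle here; the corollary is essentially a dictionary translation of the theorem. The one point that needs a little care — and the place where a reader might want more detail — is the computation $C_i \cong G/H_i$, i.e.\ that the centraliser in the symmetric group of a \emph{regular} permutation representation of a group $K$ is isomorphic to $K$. I would make sure to state explicitly that the $G$-action of $G/H_i$ on $O_i$ is regular before invoking this fact, since it is exactly regularity (not mere transitivity) that forces the centraliser to be as small as $K$ rather than larger.
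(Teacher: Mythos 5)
Your proposal is correct and follows essentially the same route as the paper: specialise Theorem \ref{th:ICA} to the abelian case, identify $C_i \cong G/H_i$, and count orbits via the Orbit--Stabiliser Theorem. The only cosmetic difference is that the paper invokes \cite[Theorem 4.2A(v)]{DM96} (the centraliser of a transitive abelian group in the symmetric group is the group itself) directly, whereas you derive regularity of the $G/H_i$-action first and then use the centraliser-of-a-regular-representation fact --- these are two phrasings of the same standard result.
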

\begin{proof}
By \cite[Theorem 4.2A (v)]{DM96}, $C_{\Sym(O_i)}(G^{O_i}) \cong G^{O_i} \cong G/G_{x_i}$, where $x_i \in O_i$. By Remark \ref{rk:subgroups}, the list of pointwise stabilisers coincide with the list of subgroups of $G$, and, as $G$ is abelian, $[H_i] = \{ H_i \}$ for all $i$. Finally, by the Orbit-Stabiliser theorem, every orbit contained in $B_i = \{ x \in A^G : G_x = H_i \}$ has size $\frac{\vert G \vert}{\vert H_i \vert}$; as these orbits form a partition of $B_i$, we have $\vert B_i \vert = \alpha_i  \frac{\vert G \vert}{\vert H_i \vert}$.  \qed
\end{proof}

\begin{example} \label{ex:ICA-klein}
Let $G = \mathbb{Z}_2 \times \mathbb{Z}_2$ and $A= \{ 0, 1\}$. By Example \ref{ex:klein}, 
\[ \ICA(G, A ) \cong (\mathbb{Z}_2)^4 \times (G \wr \Sym_2). \]
\end{example}


\section{Generating Sets of of $\CA(G;A)$} \label{generating}

For a monoid $M$ and a subset $T \subseteq M$, denote by $\langle T \rangle$ the submonoid \emph{generated} by $T$, i.e. smallest submonoid of $M$ containing $T$. Say that $T$ is a \emph{generating set} of $M$ if $M = \langle T \rangle$; in this case, every element of $M$ is expressible as a word in the elements of $T$ (we use the convention that the empty word is the identity).

Define the \emph{kernel} of a transformation $\tau : X \to X$, denoted by $\ker(\tau)$, as the partition of $X$ induced by the equivalence relation $\{ (x,y ) \in X^2  : (x)\tau = (y) \tau \}$. For example, $\ker(\phi) = \{ \{ x\} : x \in X\}$, for any $\phi \in \Sym(X)$, while $\ker(y \to z) = \{ \{ y, z \}, \{x \} : x \in X \setminus \{ y,z\} \}$, for $y, z \in X$, $y \neq z$.  	 

A large part of the classical research on CA has been focused on CA with small memory sets. In some cases, such as the elementary Rule 110, or John Conway's Game of Life, these CA are known to be Turing complete. In a striking contrast, when $G$ and $A$ are both finite, CA with small memory sets are insufficient to generate the monoid $\CA(G;A)$.

\begin{theorem} \label{minimal-memory}
Let $G$ be a finite group of size $n \geq 2$ and $A$ a finite set of size $q \geq 2$. Let $T$ be a generating set of $\CA(G;A)$. Then, there exists $\tau \in T$ with minimal memory set $S=G$.
\end{theorem}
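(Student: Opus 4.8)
The plan is to argue by contradiction: suppose $T$ generates $\CA(G;A)$ but every $\tau \in T$ has minimal memory set a proper subgroup... wait, more carefully, a proper *subset* of $G$. Actually the cleanest route is to identify a ``size'' invariant on $\CA(G;A)$ that is monotone under composition and that cannot reach the full monoid if all generators are small. The natural candidate is the kernel of a cellular automaton viewed as a transformation of $A^G$, or equivalently the size of the image $(A^G)\tau$. Concretely, I would first recall that if $\sigma, \tau \in \Tran(A^G)$ then $\vert (A^G)(\sigma\tau)\vert \le \min\{\vert(A^G)\sigma\vert, \vert(A^G)\tau\vert\}$, so the image sizes of elements of $\langle T\rangle$ are bounded below by $\min_{\tau\in T}\vert(A^G)\tau\vert$ except for the identity. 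Hence it suffices to exhibit a cellular automaton whose image is ``very small'' (smaller than that of any CA with proper memory set) yet which is not the identity; then $T$ must contain some element with image at least that small, and the final step is to show any such element must have minimal memory set $G$.

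The first main step, then, is the lower bound: I would show that if $\tau \in \CA(G;A)$ has a memory set $S \subsetneq G$, then $\vert (A^G)\tau\vert$ is bounded below by some quantity strictly larger than the minimum possible. The key observation is that a CA with memory set $S$ is determined by a local rule $\mu : A^S \to A$, and two configurations $x, y$ with $R_g\circ x\vert_S = R_g \circ y\vert_S$ for all $g$ are identified; but if $S \ne G$ there is real freedom, and in fact the constant configurations are preserved by Lemma~\ref{constant-config}, which already forces the image to contain at least one configuration from each ``constant fibre''. More usefully: one can take a CA that collapses almost everything. The extreme case is a CA with memory set $G$ whose image consists of exactly the $q$ constant configurations together with possibly one more orbit --- something like ``read the whole configuration; if it is constant $\mathbf{k}$, return $\mathbf{k}$; otherwise return a fixed non-constant configuration'' --- but this last map is only $G$-equivariant if the ``otherwise'' value is chosen $G$-equivariantly, so one should instead send every non-constant orbit to itself via a fixed point, or collapse to constants as much as equivariance allows. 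The honest target is: build $\tau_0 \in \CA(G;A)$, non-identity, whose image is as small as possible, and compute $\vert(A^G)\tau_0\vert$; then show every $\tau$ with proper memory set has strictly larger image.

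For this ``small image'' bound on CA with proper memory set, the mechanism is that the local function only sees a window of size $\vert S\vert < n$, so for a fixed target configuration $z$, the fiber $\tau^{-1}(z)$ cannot be too large: knowing $(g)(x)\tau$ for all $g$ constrains $x$ on each translate of $S$, and when $S\ne G$ one can count, using that $\mu$ has domain $A^S$, that the number of configurations mapping into any single orbit is limited, whence the image is large. I expect this counting --- getting a clean lower bound on $\vert(A^G)\tau\vert$ in terms of $q$, $n$, and $\vert S\vert$ that beats the minimum image size achievable with memory set $G$ --- to be the main obstacle, since it requires understanding how $\mu$ on a proper window can or cannot force collapse; a promising simplification is to restrict attention to the fiber over constant configurations and to orbits of trivial stabiliser. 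Once the two bounds are in place the conclusion is immediate: pick $\tau_0$ realizing the minimal non-trivial image; since $T$ generates $\CA(G;A)$, write $\tau_0$ as a word in $T$, and the image-size inequality forces some letter $\tau\in T$ to have image no larger than $\tau_0$, hence strictly smaller than any CA with proper memory set allows, so that $\tau$ has minimal memory set equal to $G$.
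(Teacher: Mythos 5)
Your strategy has two independent, fatal gaps, so it does not prove the theorem.

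First, the monotonicity of image size is applied in the wrong direction. The inequality $\vert (A^G)(\sigma\tau)\vert \le \min\{\vert(A^G)\sigma\vert, \vert(A^G)\tau\vert\}$ says that the image size of a word $\tau_1\cdots\tau_\ell$ is at most the minimum of the image sizes of its letters; hence it gives a \emph{lower} bound on each $\vert(A^G)\tau_i\vert$ in terms of $\vert(A^G)(\tau_1\cdots\tau_\ell)\vert$, which is the opposite of what you need. From the existence of an element of $\CA(G;A)$ with a very small image you cannot conclude that some generator has a small image: composition can strictly decrease image size, so maps with large images generate maps with arbitrarily small images. (In $\Tran_3$, the idempotent collapsing $1$ to $2$ and the idempotent collapsing $2$ to $3$ each have image of size $2$, but their composite has image of size $1$.) So the sentence ``the image-size inequality forces some letter $\tau\in T$ to have image no larger than $\tau_0$'' is false; the inequality in fact forces every letter to have image \emph{at least} as large as that of $\tau_0$.

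Second, the promised lower bound on $\vert(A^G)\tau\vert$ for CA with proper memory set does not exist. Take any $a\in A$ and the constant local rule $\mu\equiv a$: the resulting CA sends every configuration to the constant configuration $\mathbf{a}$, so its image has size $1$, the minimum possible, while its minimal memory set is empty. Image size therefore cannot detect the size of the minimal memory set, and no choice of $\tau_0$ rescues the argument. The paper's proof uses a different invariant: it factors the specific idempotent $(\mathbf{0}\to\mathbf{1})$ over $T$ and observes that the first non-injective letter $\tau_j$ in such a factorization must merge exactly one pair of constant configurations while permuting the non-constant ones; it then reaches a contradiction by a divisibility argument, showing that the sum of the $k$-weights of the non-constant configurations, divided by $n$, is not divisible by $q$, whereas computing that same sum through a local rule on a window of size $s<n$ forces it to be divisible by $q$. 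That arithmetic step is the crux of the theorem and has no counterpart in your sketch.
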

\begin{proof}
Suppose that $T$ is a generating set of $\CA(G, A)$ such that each of its elements has minimal memory set of size at most $n-1$. Consider the idempotent $\sigma:=(\mathbf{0} \to \mathbf{1}) \in \CA(G, A)$, where $\textbf{0}, \textbf{1} \in A^G$ are different constant configurations. Then, $\sigma = \tau_1 \tau_2 \dots \tau_\ell$, for some $\tau_i \in T$. By the definition of $\sigma$, there must be $1 \leq j \leq \ell$ such that $\ker(\tau_j) = \{ \{ \mathbf{0}, \mathbf{1} \}, \{ x \} : x \in A^G\setminus \{ \textbf{0}, \textbf{1}\} \}$. By Lemma \ref{constant-config}, $( A^G_{\text{c}})\tau_j \subseteq A^G_{\text{c}}$ and $( A^G_{\text{nc}})\tau_j = A^G_{\text{nc}}$, where 
\[A^G_{\text{c}}:= \{ \mathbf{k} \in A^G : \mathbf{k} \text{ is constant} \} \text{ and } A^G_{\text{nc}} := \{ x \in A^G : x \text{ is non-constant} \}. \]
 Let $S \subseteq G$ and $\mu : A^S \to A$ be the minimal memory set and local function of $\tau := \tau_j$, respectively. By hypothesis, $s := \vert S \vert < n$. Since the restriction of $\tau$ to $A^G_{\text{c}}$ is not a bijection, there exists $\mathbf{k} \in A^G_{\text{c}}$ (defined by $(g)\mathbf{k}:=k \in A$, $\forall g \in G$) such that $\mathbf{k} \not \in ( A^G_{\text{c}})\tau$. 

For any $x \in A^G$, define the $k$-\emph{weight} of $x$ by
\[   \vert x \vert_k := \vert \{ g \in G : (g)x \neq k \} \vert. \] 
Consider the sum of the $k$-weights of all non-constant configurations of $A^G$:
\[ w := \sum_{x \in A^G_{\text{nc}} } \vert x \vert_k  = n(q-1) q^{n-1}  - n(q-1) =  n(q-1) ( q^{n-1} - 1) .  \]
In particular, $\frac{w}{n}$ is an integer not divisible by $q$.

For any $x \in A^G$ and $y \in A^S$, define 
\[ \Sub(y, x) := \vert \{ g \in G : y = x \vert_{Sg} \} \vert. \]
Then, for any $y \in A^S$,
\[ N_y  := \sum_{x \in A^G_{\text{nc}}} \Sub(y,x) = \begin{cases}
 n q^{n-s}& \text{if } y \in A^S_{\text{nc}}, \\
 n (q^{n-s} - 1 )& \text{if } y \in A^S_{\text{c}}. 
\end{cases} \]

Let $\delta : A^2 \to \{0,1 \}$ be the Kronecker's delta function. Since $( A^G_{\text{nc}})\tau = A^G_{\text{nc}}$, we have
\begin{align*}
 w &= \sum_{x \in A^G_{\text{nc}} } \vert (x)\tau \vert_k = \sum_{y \in A^S} N_y  ( 1 - \delta( (y)\mu, k)  )    \\
& =   n q^{n-s} \sum_{y \in A^S_{\text{nc}}} ( 1 - \delta( (y)\mu, k) ) +  n ( q^{n-s} - 1 ) \sum_{y \in A^S_{\text{c}}} ( 1 - \delta( (y)\mu, k) ). 
\end{align*}
Because $\mathbf{k} \not \in ( A^G_{\text{c}})\tau$, we know that $(y)\mu \neq k$ for all $y \in A^S_{\text{c}}$. Therefore,
\[ \frac{w}{n} =  q^{n-s} \sum_{y \in A^S_{\text{nc}}} ( 1 - \delta_{ (y)\mu, k} ) +  ( q^{n-s} - 1 ) q.  \]
As $s <n$, this implies that $\frac{w}{n}$ is an integer divisible by $q$, which is a contradiction. \qed
\end{proof}

One of the fundamental problems in the study of a finite monoid $M$ is the determination of the cardinality of a smallest generating subset of $M$; this is called the \emph{rank} of $M$ and denoted by $\Rank(M)$:
\[ \Rank(M) := \min \{ \vert T \vert :  T \subseteq M \text{ and } \langle T \rangle = M \}. \]
It is well-known that, if $X$ is any finite set, the rank of the full transformation monoid $\Tran(X)$ is $3$, while the rank of the symmetric group $\Sym(X)$ is $2$ (see \cite[Ch.~3]{GM09}). Ranks of various finite monoids have been determined in the literature before (e.g. see \cite{ABJS14,AS09,GH87,G14,HM90}). 

In \cite{CRG15}, the rank of $\CA(\mathbb{Z}_n, A)$, where $\mathbb{Z}_n$ is the cyclic group of order $n$, was studied and determined when $n \in \{ p, 2^k, 2^k p : k \geq 1, \ p \text{ odd prime} \}$. Moreover, the following problem was proposed:
\begin{problem}\label{problem}
For any finite group $G$ and finite set $A$, determine $\Rank(\CA(G;A))$.
\end{problem}                      

For any finite monoid $M$ and $U \subseteq M$, the \emph{relative rank} of $U$ in $M$, denoted by $\Rank(M:U)$, is the minimum cardinality of a subset $V \subseteq M$ such that $\langle U \cup V \rangle = M$. For example, for any finite set $X$, 
\[ \Rank(\Tran(X): \Sym(X)) = 1, \]
as any $\tau \in \Tran(X)$ with $\vert (X) \tau \vert = \vert X \vert -1$ satisfies $\langle \Sym(X) \cup \{ \tau \} \rangle = \Tran(X)$. One of the main tools that may be used to determine $\Rank(\CA(G;A))$ is based on the following result (see \cite[Lemma 3.1]{AS09}).

\begin{lemma} \label{le:preliminar}
Let $G$ be a finite group and $A$ a finite set. Then,
\[ \Rank(\CA(G ; A)) = \Rank(\CA(G;A):\ICA(G;A)) + \Rank(\ICA(G;A)). \]
\end{lemma}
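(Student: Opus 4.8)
The plan is to prove the identity
\[ \Rank(\CA(G;A)) = \Rank(\CA(G;A):\ICA(G;A)) + \Rank(\ICA(G;A)) \]
by establishing the two inequalities separately. The inequality $\leq$ is immediate: if $W$ is a generating set of $\ICA(G;A)$ of size $\Rank(\ICA(G;A))$ and $V$ is a set witnessing the relative rank, i.e. $\langle \ICA(G;A) \cup V \rangle = \CA(G;A)$ with $|V| = \Rank(\CA(G;A):\ICA(G;A))$, then $W \cup V$ generates $\CA(G;A)$, so $\Rank(\CA(G;A)) \leq |W| + |V|$, which is the claimed sum.

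For the reverse inequality $\geq$, the key structural fact I would use is that $\ICA(G;A)$ is exactly the group of units of the monoid $\CA(G;A)$ (this is Theorem~\ref{AG-finite}(ii): $\ICA(G;A) = \CA(G;A) \cap \Sym(A^G)$, and an element of a finite transformation monoid is invertible in the monoid precisely when it is a bijection, since $\CA(G;A)$ preserves the finite set $A^G$). The crucial consequence is that $\ICA(G;A)$ is a maximal subsemigroup-closed "absorbing" set in the sense that needs to be made precise: if $T$ is any generating set of $\CA(G;A)$, then for each unit $u \in \ICA(G;A)$, $u$ is a product of elements of $T$, and since $u$ is invertible while any non-unit $\tau \in T$ satisfies $|(A^G)\tau| < |A^G|$ (strict drop in image size, which cannot be recovered by further composition), every factor in such a product must itself be a unit. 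Hence $\ICA(G;A) \subseteq \langle T \cap \ICA(G;A) \rangle$, so $T \cap \ICA(G;A)$ generates $\ICA(G;A)$, giving $|T \cap \ICA(G;A)| \geq \Rank(\ICA(G;A))$.

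Now take $T$ to be a generating set of $\CA(G;A)$ of minimum size $\Rank(\CA(G;A))$. Write $T = (T \cap \ICA(G;A)) \sqcup V$ where $V := T \setminus \ICA(G;A)$. Since $T$ generates $\CA(G;A)$ and $\ICA(G;A) \subseteq \langle T \cap \ICA(G;A)\rangle$, a fortiori $\ICA(G;A) \cup V$ generates $\CA(G;A)$, so $|V| \geq \Rank(\CA(G;A):\ICA(G;A))$. Combining with the previous paragraph:
\[ \Rank(\CA(G;A)) = |T| = |T \cap \ICA(G;A)| + |V| \geq \Rank(\ICA(G;A)) + \Rank(\CA(G;A):\ICA(G;A)). \]
Together with the easy direction this yields the equality.

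The main obstacle is justifying that units can only be written as products of units — i.e. that $\ICA(G;A)$ is precisely the unit group and behaves as expected. The cleanest argument is the image-size (rank of a transformation) monotonicity: for $\sigma, \rho \in \Tran(A^G)$ one has $|(A^G)\sigma\rho| \leq \min\{|(A^G)\sigma|, |(A^G)\rho|\}$, so a product equalling a bijection forces every factor to be a bijection; since $\CA(G;A)$ is closed under composition and $A^G$ is finite, a bijective cellular automaton has a cellular automaton inverse (Theorem~\ref{AG-finite}(ii) again), so "bijective CA" and "unit of $\CA(G;A)$" coincide. This is exactly the content cited from \cite[Lemma 3.1]{AS09}, which is a general fact about a finite monoid and its group of units, and I would either invoke it directly or reproduce the short argument above.
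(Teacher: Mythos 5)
Your proof is correct and is essentially the argument behind the result the paper simply cites (\cite[Lemma 3.1]{AS09}, the general identity $\Rank(M)=\Rank(M:U)+\Rank(U)$ for a finite monoid $M$ with group of units $U$): image-size monotonicity under composition forces every factor of a unit to be a bijection, and Theorem~\ref{AG-finite}(ii) identifies the bijective elements of $\CA(G;A)$ with its units, so any minimum generating set $T$ splits as a generating set of $\ICA(G;A)$ plus a relative generating set. The only point worth making explicit is that $T\cap\ICA(G;A)$ generating $\ICA(G;A)$ as a monoid coincides with generating it as a group, which holds because every element of a finite group is a positive power of its inverse; with that remark your argument is complete.
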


We shall determine the relative rank of $\ICA(G;A)$ in $\CA(G;A)$ for any finite abelian group $G$ and finite set $A$. In order to achieve this, we prove two lemmas that hold even when $G$ is nonabelian and have relevance in their own right. 

\begin{lemma} \label{le:action-orbit}
 Let $G$ be a finite group and $A$ a finite set of size $q\geq 2$. Let $x\in A^G$. If $(xG)\tau = xG$, then $\tau \vert_{xG} \in \Sym(xG)$. 
\end{lemma}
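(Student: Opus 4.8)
The plan is to exploit the fact that $\tau$, being a cellular automaton, is $G$-equivariant (Theorem~\ref{AG-finite}(i)), and that $xG$ is a single finite orbit. By Lemma~\ref{preserve} we already know $(xG)\tau = (x)\tau \, G$, so the hypothesis $(xG)\tau = xG$ just says $(x)\tau \in xG$. The restriction $\tau\vert_{xG}$ is thus a self-map of the finite set $xG$, and to show it is a bijection it suffices to show it is surjective (or injective). First I would show surjectivity directly: since $(x)\tau \in xG$, every element of $xG$ has the form $(x)\tau \cdot g = (x \cdot g)\tau$ for some $g \in G$ (using $G$-equivariance), and $x \cdot g \in xG$, so $(xG)\tau \supseteq xG$; combined with $(xG)\tau \subseteq xG$ this gives equality, hence $\tau\vert_{xG}$ is onto. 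A surjective self-map of a finite set is a bijection, so $\tau\vert_{xG} \in \Sym(xG)$.

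Alternatively — and this is really the same computation packaged differently — one can argue via stabilisers: write $(x)\tau = x \cdot h$ for some $h \in G$. For $z = x \cdot g \in xG$, $G$-equivariance gives $(z)\tau = (x \cdot g)\tau = (x)\tau \cdot g = x \cdot hg$. So $\tau\vert_{xG}$ is precisely the map $x \cdot g \mapsto x \cdot hg$, which is well-defined and invertible because left multiplication by $h$ permutes $G$ and descends to a permutation of the coset space $G_x \backslash G \cong xG$ (one must check well-definedness: $x\cdot g_1 = x \cdot g_2$ iff $g_1 g_2^{-1} \in G_x$ iff $hg_1(hg_2)^{-1} = h g_1 g_2^{-1} h^{-1} \in h G_x h^{-1}$, and since we only need $\tau\vert_{xG}$ well-defined as the restriction of a genuine function this is automatic). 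Its inverse on $xG$ is $x \cdot g \mapsto x \cdot h^{-1} g$.

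I would present the first, shorter argument in the paper: it needs only Lemma~\ref{preserve} and $G$-equivariance, and avoids choosing an explicit $h$. The one point deserving a sentence of care is the step "every element of $xG$ lies in $(xG)\tau$": given $y \in xG$ write $y = (x)\tau \cdot g$, which is possible since $(x)\tau \in xG$ and $xG$ is a full $G$-orbit, then invoke $G$-equivariance to rewrite $(x)\tau \cdot g = (x\cdot g)\tau \in (xG)\tau$.

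There is essentially no obstacle here — the statement is a short consequence of equivariance plus finiteness of the orbit. The only thing to be slightly careful about is not conflating "$\tau$ maps $xG$ into itself" (which is what the hypothesis literally provides) with the stronger-looking "$\tau$ is a bijection on $xG$"; the bridge is exactly the finiteness of $xG$ together with surjectivity, which equivariance supplies for free.
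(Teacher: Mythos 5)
Your first argument is exactly the paper's proof: the paper also reduces to surjectivity of $\tau\vert_{xG}$ on the finite set $xG$, writes an arbitrary $y \in xG$ as $(x)\tau \cdot g$, and uses $G$-equivariance to conclude $y = (x \cdot g)\tau \in (xG)\tau$. The proposal is correct and essentially identical in approach.
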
 
\begin{proof}
It is enough to show that $\tau \vert_{xG} : xG \to xG$ is surjective because $xG$ is finite. Let $y \in xG$. Since $(x)\tau \in xG$, there is $g \in G$ such that $y = (x)\tau \cdot g $. By $G$-equivariance, $y = (x \cdot g) \tau \in (xG)\tau$, and the result follows. \qed  
\end{proof}

\begin{notation}
Denote by $\mathcal{C}_G$ the set of conjugacy classes of subgroups of $G$. For any $[H_1], [H_2] \in \mathcal{C}_G$, write $[H_1] \leq [H_2]$ if $H_1 \leq g^{-1} H_2 g$, for some $g \in G$.
\end{notation}

\begin{remark}
The relation $\leq$ defined above is a well-defined partial order on $\mathcal{C}_G$. Clearly, $\leq$ is reflexive and transitive. In order to show antisymmetry, suppose that $[H_1] \leq [H_2]$ and $[H_2] \leq [H_1]$. Then, $H_1 \leq g^{-1} H_2 g$ and $H_2 \leq f^{-1} H_2 f$, for some $f,g \in G$, which implies that $\vert H_1 \vert \leq \vert H_2 \vert$ and $\vert H_2 \vert \leq \vert H_1 \vert$. As $H_1$ and $H_2$ are finite, $\vert H_1 \vert = \vert H_2 \vert$, and $H_1 = g^{-1} H_2 g$. This shows that $[H_1] = [H_2]$.     
\end{remark}

\begin{lemma} \label{le:idem}
Let $G$ be a finite group and $A$ a finite set of size $q\geq 2$. Let $x , y \in A^G$ be such that $xG \neq yG$. There exists a non-invertible $\tau \in \CA(G;A)$ such that $(xG)\tau = yG$ if and only if $[G_x] \leq [G_y]$.
\end{lemma}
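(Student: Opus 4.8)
The plan is to prove both implications by relating $\tau|_{xG}$ to a $G$-equivariant map between the two orbits, and then invoking the standard theory of equivalence of transitive $G$-actions. For the forward direction, suppose $\tau \in \CA(G;A)$ is non-invertible with $(xG)\tau = yG$. Restricting, $\lambda := \tau|_{xG} : xG \to yG$ is a $G$-equivariant map between transitive $G$-sets (by Lemma \ref{preserve} and $G$-equivariance of $\tau$). A standard fact about transitive $G$-sets is that such a $G$-map exists if and only if $G_x$ is subconjugate to $G_y$, i.e. $[G_x] \leq [G_y]$: concretely, if $x \cdot g = x$ then $(x)\lambda \cdot g = (x \cdot g)\lambda = (x)\lambda$, so $G_x \leq G_{(x)\lambda}$, and $G_{(x)\lambda}$ is conjugate to $G_y$ since both lie in $yG$ (by the Orbit–Stabiliser correspondence, stabilisers within one orbit form a single conjugacy class). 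Hence $[G_x] \leq [G_y]$. (Here we do not even need non-invertibility for this direction; non-invertibility will be forced in the converse when $xG \neq yG$.)

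For the converse, assume $[G_x] \leq [G_y]$, so after replacing $y$ by a suitable $G$-translate we may assume $G_x \leq G_y$ outright. Define $\lambda : xG \to yG$ by $(x \cdot g)\lambda := y \cdot g$ for all $g \in G$; this is well-defined precisely because $G_x \leq G_y$ (if $x \cdot g = x \cdot h$ then $gh^{-1} \in G_x \leq G_y$, so $y \cdot g = y \cdot h$), and it is visibly $G$-equivariant and surjective. Now define $\tau : A^G \to A^G$ by
\[ (z)\tau := \begin{cases} (z)\lambda & \text{if } z \in xG, \\ z & \text{otherwise}, \end{cases} \quad \forall z \in A^G. \]
Since $\lambda$ maps $xG$ into $yG$ which is disjoint from $xG$ (as $xG \neq yG$), $\tau$ is $G$-equivariant: on $xG$ this is the equivariance of $\lambda$, and off $xG$ it is the identity, and the partition into $G$-orbits is $G$-invariant by Lemma \ref{preserve}. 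So $\tau \in \CA(G;A)$ by Theorem \ref{AG-finite}(i). It is not invertible because $(x)\tau = y \in yG$ and also $(y)\tau = y$ (since $y \notin xG$), so $\tau$ is not injective; equivalently its image misses $xG$ entirely. Finally $(xG)\tau = (xG)\lambda = yG$ by surjectivity of $\lambda$, as required.

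The one point that needs care — the main (mild) obstacle — is the bookkeeping with conjugacy: ensuring in the converse that the reduction "assume $G_x \leq G_y$" is legitimate and that the resulting $\tau$ still has $(xG)\tau = yG$ with the \emph{original} $y$, not just a translate. This is handled by noting $yG = (y\cdot g)G$ for any $g$, so replacing $y$ by a conjugating translate does not change the target orbit; and in the forward direction, by recalling that all stabilisers of points in a single orbit are conjugate (Orbit–Stabiliser), so the conclusion $[G_x] \leq [G_y]$ is independent of which representative of $xG$ or $yG$ we picked. Everything else is a direct verification of $G$-equivariance and of failure of injectivity, both of which are immediate from the construction. I would also remark that this lemma is the ``non-invertible analogue'' of Lemma \ref{conjugate}, and the partial order $\leq$ on $\mathcal{C}_G$ introduced just above is exactly what makes the statement clean.
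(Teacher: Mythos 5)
Your proposal is correct and follows essentially the same route as the paper: the forward direction is the same stabiliser computation (the paper writes $(x)\tau = y\cdot h$ and deduces $G_x \leq h^{-1}G_y h$, which is your observation that $G_x \leq G_{(x)\lambda}$ with $G_{(x)\lambda}$ conjugate to $G_y$), and your construction for the converse is exactly the paper's idempotent $\tau_{x,y}$, with the conjugating element absorbed by replacing $y$ with the translate $y\cdot g$ rather than written into the formula. The only presentational difference is this normalisation, which you justify carefully; no gap.
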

\begin{proof}
Suppose that $[G_x] \leq [G_y]$. Then, $G_x \leq g^{-1} G_y g$, for some $g \in G$. We define an idempotent $\tau_{x,y} : A^G \to A^G$ that maps $xG$ to $yG$:
\[ (z) \tau_{x,y} := \begin{cases}
y \cdot g h & \text{if } z = x \cdot h, \\ 
z & \text{otherwise},
 \end{cases}  \quad \forall z \in A^G. \] 
We verify that $\tau_{x,y}$ is well-defined. If $x \cdot h_1 = x \cdot  h_2$, for $h_i \in G$, then $h_1 h_2^{-1} \in G_x$. As $G_x \leq g^{-1} G_y g$, for some $s \in G_y$, we have $ h_1 h_2^{-1} = g^{-1} s g$. Thus, $g h_1 = s g h_2$ implies that $y \cdot gh_1 = y \cdot g h_2$, and $(x \cdot h_1 )\tau = (x \cdot h_2) \tau$. Clearly, $\tau_{x,y}$ is non-invertible and $G$-equivariant, so $\tau_{x,y}\in \CA(G;A)$.

Conversely, suppose there exists $\tau \in \CA(G;A)$ such that $(xG)\tau = yG$. Then, $(x)\tau = y \cdot h$, for some $h \in G$. Let $s \in G_x$. By $G$-equivariance, 
\[ y\cdot h = (x) \tau = (x \cdot s) \tau = (x) \tau \cdot s = y \cdot hs .\]
Thus $h s h^{-1} \in G_y$ and $s \in h^{-1} G_y h$. This shows that $[G_x] \leq [G_y]$. \qed
\end{proof}

\begin{corollary} \label{cor:idem}
Suppose that $G$ is finite abelian. Let $x,y \in A^G$ be such that $xG \neq yG$. There exists $\tau_{x,y} \in \CA(G;A)$ such that $(x)\tau_{x,y} = y$ and $(z)\tau_{x,y} = z$ for all $z \in A^G \setminus xG$ if and only if $G_x \leq G_y$. 
\end{corollary}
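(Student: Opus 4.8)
The plan is to deduce this statement directly from Lemma~\ref{le:idem} and its proof, exploiting that when $G$ is abelian every conjugacy class of subgroups is a singleton, so that the condition $[G_x] \leq [G_y]$ of Lemma~\ref{le:idem} is literally $G_x \leq G_y$. The only point needing care is that the present conclusion asks for a map fixing \emph{all} of $A^G \setminus xG$ pointwise, not merely one carrying $xG$ onto $yG$, so instead of quoting Lemma~\ref{le:idem} as a black box I would re-examine its construction in the abelian case.

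For the forward implication, suppose $G_x \leq G_y$. I would revisit the idempotent built in the proof of Lemma~\ref{le:idem}, now taking the witnessing element $g$ to be the identity $e$ of $G$, which is legitimate precisely because $G_x \leq G_y = e^{-1} G_y e$. This yields
\[ (z)\tau_{x,y} := \begin{cases} y \cdot h & \text{if } z = x \cdot h, \\ z & \text{otherwise,} \end{cases} \qquad \forall z \in A^G. \]
The same well-definedness check applies: if $x \cdot h_1 = x \cdot h_2$ then $h_1 h_2^{-1} \in G_x \leq G_y$, so $y \cdot h_1 = y \cdot h_2$. Taking $h = e$ gives $(x)\tau_{x,y} = y$, and by construction $\tau_{x,y}$ fixes every $z \notin xG$ pointwise; $G$-equivariance is verified exactly as in Lemma~\ref{le:idem}, so $\tau_{x,y} \in \CA(G;A)$ as required (it is moreover non-invertible, since $xG \neq yG$).

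For the converse, suppose such a $\tau_{x,y} \in \CA(G;A)$ exists. By Theorem~\ref{AG-finite} it is $G$-equivariant. Let $s \in G_x$; then
\[ y = (x)\tau_{x,y} = (x \cdot s)\tau_{x,y} = (x)\tau_{x,y} \cdot s = y \cdot s, \]
so $s \in G_y$, and hence $G_x \leq G_y$.

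I do not expect any substantive obstacle: the statement is genuinely a corollary of Lemma~\ref{le:idem}. The one subtlety to flag in the write-up is that the abelian hypothesis is exactly what makes the choice $g = e$ available, which is what upgrades the conclusion from ``$(xG)\tau = yG$'' to ``fixes $A^G \setminus xG$ pointwise''.
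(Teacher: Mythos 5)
Your proof is correct and follows essentially the same route the paper intends: the corollary is obtained by specialising the idempotent construction from Lemma~\ref{le:idem} to the case $g = e$, which is available because the hypothesis is the literal containment $G_x \leq G_y$, and the converse is the same $G$-equivariance computation as in the lemma. The well-definedness check, the pointwise fixing of $A^G \setminus xG$, and the converse argument are all handled as in the paper.
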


\begin{notation} \label{notation-edges}
Consider the directed graph $(\mathcal{C}_G, \mathcal{E}_G)$ with vertex set $\mathcal{C}_G$ and edge set
\[ \mathcal{E}_G := \left\{ ([H_i], [H_j]) \in \mathcal{C}_G^2 : [H_i] \leq [H_j] \right\}.  \]
When $G$ is abelian, this graph coincides with the lattice of subgroups of $G$.
\end{notation}

\begin{remark}
Lemma \ref{le:idem} may be restated in terms of $\mathcal{E}_G$. By Lemma \ref{le:action-orbit}, loops $([H_i],[H_i])$ do not have corresponding non-invertible CA when $\alpha_{[H_i]}(G;A)=1$.  
\end{remark}

\begin{theorem} \label{th:relative rank}
Let $G$ be a finite abelian group and $A$ a finite set of size $q\geq 2$. Let $H_1, H_2, \dots, H_r$ be the list of different subgroups of $G$ with $H_1 = G$. For each $1 \leq i \leq r$, let $\alpha_i := \alpha_{[H_i]}(G;A)$. Then,
\[ \Rank(\CA(G;A):\ICA(G;A)) =  \vert \mathcal{E}_G \vert - \sum_{i=2}^r \delta(\alpha_i,1), \]
where $\delta : \mathbb{N}^2 \to \{0,1 \}$ is Kronecker's delta function. 
\end{theorem}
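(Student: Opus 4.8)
The plan is to establish the two inequalities separately: the upper bound by an explicit construction together with a reduction to elementary collapses, and the lower bound by a ``no room'' tracking argument on $G$-orbits.

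\emph{Setup.} Write $G_O:=G_x$ for any $x\in O$ (well defined since $G$ is abelian). By Lemma~\ref{preserve} there is a monoid homomorphism $\pi\colon\CA(G;A)\to\Tran(\mathcal{O}(G;A))$, $\tau\mapsto\bar\tau$; by Lemma~\ref{le:action-orbit} its kernel (CA fixing every orbit set-wise) consists of bijections on each orbit and so lies in $\ICA(G;A)$; by the proof of Theorem~\ref{th:ICA}, $\pi(\ICA(G;A))$ is the group $S$ of permutations of $\mathcal{O}(G;A)$ preserving the partition $\mathcal{B}=\{B_{[H]}\}$; and by Lemmas~\ref{le:idem} and \ref{le:action-orbit}, $\pi(\CA(G;A))=\{\phi\in\Tran(\mathcal{O}(G;A)):G_{(O)\phi}\ge G_O\ \forall O\}$. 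Call $\tau$ an \emph{elementary collapse} of type $([H],[K])$ if $\bar\tau$ fixes every orbit except that it sends one orbit of type $H$ onto one orbit of type $K$; by Corollary~\ref{cor:idem} such CA exist for every edge of $\mathcal{E}_G$, the type being a loop exactly when the collapse is within a block. The \emph{useful} edges are the non-loops together with the loops $([H_i],[H_i])$ with $\alpha_i\ge2$; since $\alpha_1=\alpha_{[G]}=q\ge2$, their number is exactly $|\mathcal{E}_G|-\sum_{i=2}^r\delta(\alpha_i,1)$.

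\emph{Upper bound.} Take $V$ to be one elementary collapse per useful edge: for each $H_i<H_j$ an elementary collapse $v_{ij}$ from the fixed orbit $O_i$ onto $O_j$, and for each $i$ with $\alpha_i\ge2$ an elementary collapse $v_{ii}$ within $B_{[H_i]}$ (all via Corollary~\ref{cor:idem}); then $|V|=|\mathcal{E}_G|-\sum_{i=2}^r\delta(\alpha_i,1)$. First, conjugating the $v_{ij},v_{ii}$ by invertible CA yields \emph{every} elementary collapse, with arbitrary $G$-equivariant surjection on the moved orbit, using that $\ICA(G;A)$ realises all of $S$ on orbits and the full $C(G^O)$ on each orbit $O$ (Theorem~\ref{th:ICA}, Corollary~\ref{cor:structure}) and that any two $G$-equivariant surjections $O\to O'$ differ by such automorphisms. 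Next, every $\tau\in\CA(G;A)$ is a product of invertibles and elementary collapses, by induction on $|\mathcal{O}(G;A)|-|\mathrm{Im}(\bar\tau)|$: if $\bar\tau$ is bijective it preserves $\mathcal{B}$ (a type-monotone bijection of a finite preorder preserves its classes) and $\tau\in\ICA(G;A)$ by Theorem~\ref{AG-finite}; otherwise pick $O'\notin\mathrm{Im}(\bar\tau)$, let $c$ be the elementary collapse agreeing with $\tau$ on $O'$ and fixing every other orbit, let $\tau_1$ be $\tau$ modified to fix $O'$ pointwise, and check $\tau=\tau_1c$ with $\mathrm{Im}(\bar\tau_1)\supsetneq\mathrm{Im}(\bar\tau)$; apply the inductive hypothesis.

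\emph{Lower bound.} Let $V\subseteq\CA(G;A)$ with $\langle\ICA(G;A)\cup V\rangle=\CA(G;A)$. Fix a useful edge $e$ and let $\tau_e^{\ast}$ be an elementary collapse of type $e$, moving a single orbit $O$ and fixing every other orbit; write $\tau_e^{\ast}=w_1\cdots w_k$ with $w_\ell\in\ICA(G;A)\cup V$. Two facts drive the argument: stabilisers only grow along a product (Lemma~\ref{le:idem}), so the image of $O$ has type between $G_O$ and the target type at every prefix; and each orbit fixed by $\bar\tau_e^{\ast}$ is fixed by the whole word, hence at every prefix the fixed orbits of a given type $H$ have pairwise distinct images, filling up the set of type-$H$ orbits (a prefix identifying two of them would make the whole word do so). Counting tokens leaves no room for the image of $O$ to sit at an intermediate type: were it of a type $H$ strictly between $G_O$ and the target (resp.\ equal to $G_O$, in the loop case) at some prefix, it would equal the prefix-image of a fixed type-$H$ orbit, forcing $(O)\bar\tau_e^{\ast}$ to be that orbit, a type contradiction. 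Hence the image of $O$ jumps directly to the target type (resp., the word restricted to the type-$G_O$ orbits stays there and is non-injective); either way some letter $w_m$ performs $e$, and $w_m\notin\ICA(G;A)$ gives $w_m\in V$. A sharper token count shows $w_m$ is bijective on the type-$H$ orbits for $H\ne G_O$ and a corank-one map on the type-$G_O$ orbits, so $\overline{w_m}\in S\,\overline{\tau_e^{\ast}}\,S$; absorbing the (flexible) within-orbit data and using that $\ker\pi\subseteq\ICA(G;A)$, one gets $w_m\in\ICA(G;A)\,\tau_e^{\ast}\,\ICA(G;A)$. These double cosets are pinned down by, and distinct for, distinct useful edges, so the elements of $V$ obtained are distinct, whence $|V|\ge|\mathcal{E}_G|-\sum_{i=2}^r\delta(\alpha_i,1)$; combined with the upper bound this proves the theorem.

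\emph{Main obstacle.} The hard part is the lower bound, and within it the final step — upgrading ``some letter performs $e$'' to ``$V$ has one element per useful edge''. The subtlety is entirely about non-covering pairs $[H_i]<[H_k]<[H_j]$: a priori one generator could perform several edges, so one must verify that $\tau_e^{\ast}$, fixing almost everything, forces the letter that performs $e$ to be \emph{surgical} — collapsing only the single orbit on the path of $O$ and permuting everything else blockwise — and hence to land in the double coset of $\tau_e^{\ast}$. Making the token-counting precise and carrying out the double-coset and within-orbit matching is where the real work lies; the induction in the upper bound, and the ``flexibility of equivariant surjections'' bookkeeping, are routine by comparison.
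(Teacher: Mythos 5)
Your strategy coincides with the paper's: an upper bound from one idempotent collapse per useful edge, and a lower bound showing each useful edge forces its own non-invertible generator. Two issues in the execution. First, a concrete error in the upper-bound induction: with $O'\notin \mathrm{Im}(\bar\tau)$ and $\tau_1$ defined to fix $O'$ and agree with $\tau$ elsewhere, the inclusion $\mathrm{Im}(\bar\tau_1)\supsetneq\mathrm{Im}(\bar\tau)$ can fail --- the image gains $O'$ but loses $(O')\bar\tau$ whenever $O'$ is the \emph{unique} orbit mapping to $(O')\bar\tau$ --- so your induction measure need not decrease. It is repairable (walk along the forward $\bar\tau$-orbit of $O'$ until you hit an orbit with two preimages, or decompose block-by-block inside each $B_{[H_i]}$ as the paper does), but as written the induction does not terminate.

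Second, the step you explicitly defer (``where the real work lies'') is a genuine gap, since it is the crux of the lower bound; it does close, and more cheaply than your token count suggests. Let $w_m$ be the \emph{first} letter at which the induced map on $\mathcal{O}(G;A)$ stops being injective. The preceding prefix is then a bijection on orbits which is type-monotone by Lemma \ref{le:idem}, hence type-preserving; consequently $\bar w_m$ has corank one on all of $\mathcal{O}(G;A)$, identifies precisely the prefix-images of the two orbits merged by $\bar\tau_e^*$ (the unique pair merged by the whole word), and is type-preserving on the remaining orbits (a fixed orbit whose image ever jumps type can never return to its own type). This is exactly the ``surgical'' property you were after, and your intermediate-type count is then only needed to see that the merged pair has the right source and target types. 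Combined with the transitivity of $C(G^{xG})\cong G/G_x$ on equivariant surjections out of $xG$, this places $w_m$ in the double coset of $\tau_e^*$. For comparison, the paper's own lower bound runs a pigeonhole over the submonoids $\langle\ICA(G;A),\tau\rangle$, $\tau\in U$ (whose non-invertible parts are pairwise disjoint), and closes the loop-edge case by noting that $\ker(w_1)$ refines $\ker(\tau_{x_i,y_i})$ and, being a $G$-invariant partition, must be trivial or equal to it; it dismisses the non-loop case as ``similar'', which is precisely where your intermediate-type argument would be needed to make it airtight.
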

\begin{proof}
For all $1 \leq i \leq r$, let $B_i := B_{[H_i]}$. Fix orbits $x_i G \subseteq B_i$, so $H_i = G_{x_i}$. Assume that the list of subgroups of $G$ is ordered such that
 \[ \vert x_1 G \vert \leq \dots \leq \vert x_r G \vert, \text{ or, equivalently, } \vert G_{x_1} \vert  \geq \dots \geq \vert G_{x_r} \vert . \]  
For every $\alpha_i \geq 2$, fix orbits $y_i G \subseteq B_i$ such that $x_i G \neq y_i G$. We claim that $\CA(G,A) = M:= \left\langle \ICA(G;A) \cup U \right\rangle$, where
\[ U := \left\{ \tau_{x_i,x_j} : [G_{x_i}] < [G_{x_j}] \right\} \cup \left\{ \tau_{x_i, y_i} : \alpha_i \geq 2 \right\}, \]
and $\tau_{x_i, x_j}, \tau_{x_i, y_i}$ are the idempotents defined in Corollary \ref{cor:idem}. For any $\tau \in \CA(G;A)$, consider $\tau_i \in \CA(G;A)$, $1 \leq i \leq r$, defined by
\[ (x)\tau_i  = \begin{cases}
(x)\tau & \text{if } x \in B_i \\
x & \text{otheriwse}.
\end{cases}\]
By Lemmas \ref{conjugate} and \ref{le:idem}, $(B_i)\tau \subseteq \bigcup_{j \leq i} B_j$ for all $i$. Hence, we have the decomposition
\[ \tau = \tau_1 \tau_2 \dots \tau_r.  \] 
For each $i$, decompose $\tau_i$ further as $\tau_i = \tau_i^{\prime} \tau_{i}^{\prime \prime}$, where $(B_i)\tau_i^{\prime} \subseteq \bigcup_{j < i} B_j$ and $(B_i)\tau_{i}^{\prime \prime} \subseteq B_i$. We shall prove that $\tau_i^\prime \in M$ and $\tau_i^{\prime \prime} \in M$.
\begin{enumerate}
\item We show that $\tau_i^{\prime} \in M$. If $B_i = \cup_{s=1}^{\alpha_i} P_s$ is the decomposition of $B_i$ into its $G$-orbits, we may write $\tau_i^\prime = \tau_i^\prime \vert_{P_1} \dots \tau_i^\prime \vert_{P_{\alpha_i}}$, where $\tau_i^\prime \vert_{P_s}$ acts as $\tau_i^\prime$ on $P_s$ and fixes everything else. Note that $Q_s = (P_s)\tau_i^\prime \vert_{P_s}$ is a $G$-orbit in $B_j$ for some $j<i$. By Theorem \ref{th:ICA}, there exist 
\[ \phi_s \in \left( (G/ G_{x_i}) \wr \Sym_{\alpha_i} \right) \times \left( (G/ G_{x_j}) \wr \Sym_{\alpha_j} \right) \leq \ICA(G;A) \]
such that $\phi_s$ acts as the double transposition $(x_i G, P_s) (x_j G,  Q_s)$. Since $G/ G_{x_i}$ and $G/ G_{x_j}$ are transitive on their respective orbits, we may take $\phi_s$ such that $(x_i) \phi_s \tau_i^\prime \vert_{P_s}\phi_s^{-1} = x_j$. Then,
\[  \tau_i^\prime \vert_{P_s} = \phi_s^{-1}  \tau_{x_i, x_j} \phi_s \in M. \]

\item We show $\tau_{i}^{\prime \prime} \in M$. In this case, $\tau_i^{\prime \prime} \in \Tran(B_i)$. In fact, as $\tau_i^{\prime \prime}$ preserves the partition of $B_i$ into $G$-orbits, Lemma \ref{le:action-orbit} implies that $\tau_i^{\prime \prime} \in (G/G_{x_i}) \wr \Tran_{\alpha_i}$. If $\alpha_i \geq 2$, the semigroup $\Tran_{\alpha_i}$ is generated by $\Sym_{\alpha_i} \leq \ICA(G,A)$ togerher with the idempotent $\tau_{x_i, y_i}$. Hence, $\tau_i^{\prime \prime} \in M$.    
\end{enumerate}
Therefore, we have established that $\CA(G;A) = \left\langle \ICA(G;A) \cup U \right\rangle$.

Suppose now that there exists $V \subseteq \CA(G;A)$ such that $\vert V \vert < \vert U \vert$ and 
\[ \left\langle \ICA(G ; A) \cup V  \right\rangle = \CA(G ; A). \]
Hence, for some $\tau \in U$, we must have 
\[  V \cap \langle \ICA(G; A) , \tau \rangle = \emptyset. \]
If $\tau = \tau_{x_i,y_i}$, for some $i$ with $\alpha_i \geq 2$, this implies that there is no $\xi \in V$ with
\[ \ker(\xi) =  \left\{ \{a, b \}, \{ c \} : a \in x_i G, \ b \in y_i G, \ c \in A^G \setminus (x_iG \cup y_i G) \right\}. \]
Hence, there is no $\xi \in \left\langle \ICA(G ; A) \cup V  \right\rangle = \CA(G ; A)$ with kernel of this form, which is a contradiction because $\tau_{x_i,y_i}$ itself has kernel of this form. We obtain a similar contradiction if $\tau = \tau_{x_i,x_j}$ with $[G_{x_i}] < [G_{x_j}]$.   \qed
\end{proof}

\begin{corollary} \label{cor:bound}
Let $G$ be a finite abelian group with $\Rank(G) = m$ and $A$ a finite set of size $q \geq 2$. With the notation of Theorem \ref{th:relative rank},
\begin{align*}
\Rank(\CA(G;A)) & \leq  \sum_{i=2}^r  m \alpha_i + 2 r + \vert \mathcal{E}_G \vert - \delta(q,2) - \sum_{i=2}^r ( 3\delta(\alpha_i,1) + \delta(\alpha_i,2) )   \\
&  \leq  \sum_{i=2}^r  m \alpha_i + 2r  +  r^2. 
\end{align*}
\end{corollary}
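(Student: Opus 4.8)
The plan is to combine Lemma~\ref{le:preliminar}, Theorem~\ref{th:relative rank}, and Theorem~\ref{th:ICA} with the known ranks of wreath products of transformation monoids, and then simplify the resulting arithmetic expression to obtain the two successive upper bounds. By Lemma~\ref{le:preliminar} we have $\Rank(\CA(G;A)) = \Rank(\CA(G;A):\ICA(G;A)) + \Rank(\ICA(G;A))$, and Theorem~\ref{th:relative rank} already gives the first summand as $\vert \mathcal{E}_G \vert - \sum_{i=2}^r \delta(\alpha_i,1)$. So the work is entirely in bounding $\Rank(\ICA(G;A))$ from above, using the decomposition $\ICA(G;A) \cong \prod_{i=1}^r \big( (G/H_i) \wr \Sym_{\alpha_i} \big)$ from Corollary~\ref{cor:structure}.

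First I would bound the rank of a direct product of groups by the sum of the ranks of the factors (this is the trivial inequality $\Rank(A \times B) \le \Rank(A) + \Rank(B)$, valid for all monoids). Then I would bound $\Rank\big((G/H_i) \wr \Sym_{\alpha_i}\big)$ for each factor. For the factor $i=1$, where $H_1 = G$ and $G/H_1$ is trivial, the wreath product is just $\Sym_{\alpha_1} = \Sym_q$, which has rank $2$ if $q \ge 3$ and rank $1$ if $q=2$; this accounts for the $-\delta(q,2)$ term and contributes the "$2$" inside "$2r$" for $i=1$. For $i \ge 2$: if $\alpha_i = 1$ the wreath product is just $G/H_i$, of rank at most $m = \Rank(G)$ (since a quotient of an $m$-generated group is $m$-generated); if $\alpha_i = 2$ it is $(G/H_i) \wr \Sym_2$, generable by a generating set of the base group $(G/H_i)^2$'s diagonal-type elements together with generators of $\Sym_2$ — here one uses a standard lemma that $\Rank(C \wr \Sym_\alpha) \le \Rank(C) + \Rank(\Sym_\alpha)$ for $\alpha \ge 2$, giving $\le m + 1$; and if $\alpha_i \ge 3$ one gets $\le m + 2$. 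Collating: each $i \ge 2$ contributes at most $m\alpha_i$ from the base group is an overcount of the cleaner bound, but the paper's stated form keeps $m\alpha_i$ (a safe upper bound for $\Rank((G/H_i)^{\alpha_i})$, hence for the base group of the wreath product) plus a correction $2 - 3\delta(\alpha_i,1) - \delta(\alpha_i,2)$ capturing the $\Sym_{\alpha_i}$ part (contributing $2$ when $\alpha_i \ge 3$, $1$ when $\alpha_i = 2$, $0$ when $\alpha_i=1$) and the fact that when $\alpha_i = 1$ the base contribution $m\alpha_i = m$ already overshoots. Assembling all factors gives $\Rank(\ICA(G;A)) \le \sum_{i=2}^r m\alpha_i + 2r - \delta(q,2) - \sum_{i=2}^r(3\delta(\alpha_i,1) + \delta(\alpha_i,2))$, and adding the relative rank from Theorem~\ref{th:relative rank} yields the first displayed inequality.

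For the second, cruder inequality I would drop the nonpositive correction terms $-\delta(q,2)$ and $-\sum_{i=2}^r(3\delta(\alpha_i,1)+\delta(\alpha_i,2))$, and bound $\vert \mathcal{E}_G \vert - \sum_{i=2}^r \delta(\alpha_i,1) \le \vert \mathcal{E}_G \vert \le r^2$, since $\mathcal{E}_G \subseteq \mathcal{C}_G^2$ and $\vert \mathcal{C}_G \vert = r$ (for $G$ abelian, $\mathcal{C}_G$ is exactly the set of subgroups). This collapses the bound to $\sum_{i=2}^r m\alpha_i + 2r + r^2$.

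The main obstacle is getting the bounds on $\Rank\big((G/H_i)\wr\Sym_{\alpha_i}\big)$ exactly right in the three regimes $\alpha_i=1,2,\ge 3$, and in particular justifying the $m\alpha_i$ term as a legitimate (if wasteful) upper bound on the rank of the base group $(G/H_i)^{\alpha_i}$ while correctly tracking the contribution of the top group $\Sym_{\alpha_i}$ through the $\delta$-corrections; one must be careful that the wreath-product generating-set construction (lift base generators, adjoin top generators) genuinely generates the whole semidirect product, which is standard but worth stating. The rest is bookkeeping with Kronecker deltas.
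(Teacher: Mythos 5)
Your overall route is the paper's: combine Lemma \ref{le:preliminar} with Theorem \ref{th:relative rank}, and bound $\Rank(\ICA(G;A))$ factor by factor through the decomposition of Corollary \ref{cor:structure}, using $\Rank(\Sym_q)=2-\delta(q,2)$ for the $i=1$ factor and a wreath-product rank bound for $i\geq 2$. However, your bookkeeping for the factors with $\alpha_i=1$ contains a genuine error. When $\alpha_i=1$ the $i$-th factor is just $G/H_i$, whose rank can equal $m$ exactly; your per-factor contribution $m\alpha_i+2-3\delta(\alpha_i,1)-\delta(\alpha_i,2)=m-1$ is therefore not a valid upper bound, and the justification you offer (``the base contribution $m\alpha_i=m$ already overshoots'') does not hold. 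Concretely, for $G=\mathbb{Z}_2$ and $q=2$ one has $r=2$, $m=1$ and $\alpha_2=1$, so your intermediate claim gives $\Rank(\ICA(G;A))\leq 1$, whereas $\ICA(G;A)\cong \Sym_2\times\mathbb{Z}_2$ has rank $2$.

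Moreover, even granting that intermediate bound, adding the relative rank $\vert\mathcal{E}_G\vert-\sum_{i=2}^r\delta(\alpha_i,1)$ from Theorem \ref{th:relative rank} to it produces a total correction of $-\sum_{i=2}^r\bigl(4\delta(\alpha_i,1)+\delta(\alpha_i,2)\bigr)$, not the $-\sum_{i=2}^r\bigl(3\delta(\alpha_i,1)+\delta(\alpha_i,2)\bigr)$ appearing in the statement, so your last step does not in fact yield the first displayed inequality. The correct accounting, which is what the paper does, is $\Rank\bigl((G/H_i)\wr\Sym_{\alpha_i}\bigr)\leq m\alpha_i+2-2\delta(\alpha_i,1)-\delta(\alpha_i,2)$, i.e.\ a contribution of $m$ (not $m-1$) when $\alpha_i=1$; the third $\delta(\alpha_i,1)$ in the corollary then comes from the relative-rank formula, not from $\Rank(\ICA(G;A))$. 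With that repair the rest of your argument, including the crude estimates $\vert\mathcal{E}_G\vert\leq r^2$ and the dropping of the nonpositive $\delta$-terms for the second inequality, goes through as in the paper.
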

\begin{proof}
Using the fact $\Rank((G/H_i)\wr \Sym_{\alpha_i}) \leq m \alpha_i + 2 - 2\delta(\alpha_i,1) - \delta(\alpha_i,2)$ and $\Rank((G/H_1)\wr \Sym_{q}) = 2 - \delta(q,2)$, the result follows by Theorem \ref{th:relative rank}, Corollary \ref{cor:structure} and Lemma \ref{le:preliminar}.  \qed
\end{proof}

The bound of Corollary \ref{cor:bound} may become tighter if we actually know $\Rank(G/H_i)$, for all $H_i \leq G$, as in Example \ref{ex:ICA-klein}.  

\begin{example}
Let $G=\mathbb{Z}_2 \times \mathbb{Z}_2$ be the Klein-four group and $A = \{ 0,1 \}$. With the notation of Example \ref{ex:klein}, Figure \ref{Fig1} illustrates the Hasse diagram of the subgroup lattice of $G$ (i.e. the actual lattice of subgroups is the transitive and reflexive closure of this graph). 
\begin{figure}[h]
\centering
\begin{tikzpicture}[vertex/.style={circle, draw, fill=none, inner sep=0.55cm}]
    \vertex{1}{1}{2.2}    \node at (1,3.7) {$H_5 \cong \mathbb{Z}_1$};  
    \vertex{2}{3}{1}    \node at (3,1.7) {$H_4 \cong \mathbb{Z}_2$};   
   \vertex{3}{1}{1}    \node at (1,1.7) {$H_3 \cong \mathbb{Z}_2$};  
   \vertex{4}{-1}{1}    \node at (-1,1.7) {$H_2 \cong \mathbb{Z}_2$};  
   \vertex{5}{1}{-.2}   \node at (1,-.4) {$H_1 = G$};   

  \arc{1}{2}
  \arc{1}{3}
 \arc{1}{4}
\arc{2}{5}
\arc{3}{5}
\arc{4}{5}
   \end{tikzpicture} 
\caption{Lattice of subgroups of $G = \mathbb{Z}_2 \times \mathbb{Z}_2$.}
\label{Fig1}
   \end{figure}
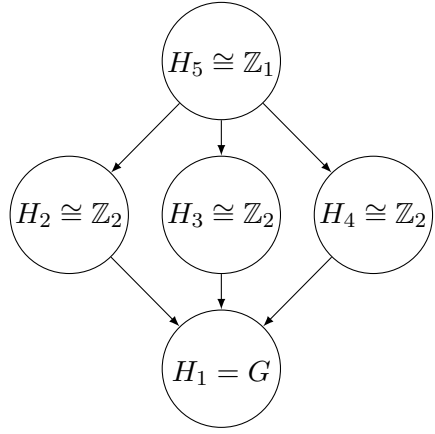

Hence, by Theorem \ref{th:relative rank} and Example \ref{ex:ICA-klein},
\begin{align*}
& \Rank(\CA(G;A):\ICA(G;A))  =  \vert \mathcal{E}_G \vert  - 3 = 12 - 3 = 9,\\
& \Rank(\CA(G;A))   \leq 9 + 9 = 18, \text{ as } \Rank(\ICA(G;A)) \leq 9.  
\end{align*}
\end{example}

Because of Theorem \ref{th:relative rank}, it is particularly relevant to determine in which situations $\alpha_{[H]}(G;A)=1$. We finish this paper with some partial results in this direction that hold for arbitrary finite groups.

Denote by $[G:H]$ the index of $H \leq G$ (i.e. the number of cosets of $H$ in $G$).

\begin{lemma}
Let $G$ be a finite group and $A$ a finite set of size $q\geq 2$. Assume there is $H \leq G$ with $[G:H] = 2$. Then, $\alpha_{[H]} (G;A) = 1$ if and only if $q = 2$.
\end{lemma}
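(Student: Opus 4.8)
The plan is to compute $\alpha_{[H]}(G;A)$ directly by counting the configurations $x \in A^G$ whose stabiliser equals $H$, and then dividing by the common orbit size. Since $[G:H] = 2$, the subgroup $H$ is normal in $G$, so its conjugacy class is $[H] = \{H\}$ and $B_{[H]} = \{x \in A^G : G_x = H\}$. A configuration $x$ has $H \leq G_x$ precisely when $x$ is constant on each coset of $H$; since there are two cosets, such $x$ is determined by a pair $(a,b) \in A^2$ (the value on $H$ and the value on the nontrivial coset), giving $q^2$ configurations fixed pointwise by $H$. Among these, the ones with $G_x = G$ (i.e. globally constant) are the $q$ diagonal pairs $a = b$, and the remaining $q^2 - q$ have $G_x = H$ exactly, because any subgroup strictly containing $H$ is $G$ itself (as $[G:H] = 2$ is prime). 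Hence $\vert B_{[H]} \vert = q^2 - q = q(q-1)$.

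Next I would apply the Orbit--Stabiliser theorem: every $G$-orbit inside $B_{[H]}$ has size $[G:G_x] = [G:H] = 2$. Since $B_{[H]}$ is partitioned into these orbits (Lemma~\ref{preserve} and Notation~\ref{alpha-notation}), we get
\[ \alpha_{[H]}(G;A) = \frac{\vert B_{[H]} \vert}{2} = \frac{q(q-1)}{2}. \]
Then $\alpha_{[H]}(G;A) = 1$ if and only if $q(q-1) = 2$, i.e. $q = 2$. Conversely, when $q = 2$ this formula gives exactly $1$, so the equivalence holds. (One can also see the $q=2$ case concretely: the two configurations constant-$0$-on-$H$/constant-$1$-on-the-other-coset and its swap form a single orbit.)

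I do not expect any serious obstacle here; the only point requiring a little care is verifying that $G_x = H$ exactly — not some larger subgroup — for the non-diagonal pairs, and this is immediate from the fact that $H$ is maximal in $G$ because its index is the prime $2$. The argument is essentially the same computation already carried out in Corollary~\ref{cor:structure} (the identity $\vert G \vert \alpha_i = \vert H_i \vert \cdot \vert\{x : G_x = H_i\}\vert$ specialises to $2\alpha_{[H]} = \vert H\vert \cdot q(q-1)/\vert H\vert \cdot \dots$, wait — more directly, $\vert B_{[H]}\vert = \alpha_{[H]} \cdot [G:H]$), so the proof is just a matter of assembling these pieces cleanly.
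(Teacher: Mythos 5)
Your proof is correct, and it takes a genuinely different route from the paper's. The paper argues by exhibiting orbits: it fixes the configuration $x$ equal to $0$ on $H$ and $1$ on the other coset, shows for $q=2$ that every $y$ with $G_y = H$ is constant on the two cosets and hence equals $x$ or $x\cdot s$ (so $\alpha_{[H]}(G;A)=1$), and for $q\geq 3$ exhibits a configuration in $B_{[H]}$ taking a value outside $\{0,1\}$, which therefore cannot lie in $xG$ (so $\alpha_{[H]}(G;A)\geq 2$). You instead count: the configurations fixed pointwise by $H$ are exactly those constant on the two cosets, hence parametrised by pairs $(a,b)\in A^2$; maximality of $H$ (its index $2$ is prime) forces $G_x=H$ for precisely the $q^2-q$ off-diagonal pairs; and the Orbit--Stabiliser Theorem gives orbit size $[G:H]=2$, whence $\alpha_{[H]}(G;A)=q(q-1)/2$. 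This yields the stated equivalence at once and is strictly more informative, since it computes $\alpha_{[H]}(G;A)=\binom{q}{2}$ exactly -- consistent with Example \ref{ex:klein}, where each of the three index-two subgroups has $\alpha=1$. All the steps you flag as needing care (constancy on cosets characterising $H\leq G_x$, maximality of $H$, uniform orbit size) do hold; the only blemish is the trailing parenthetical, which garbles the bookkeeping -- the clean identity you want there is simply $\vert B_{[H]}\vert=\alpha_{[H]}(G;A)\cdot[G:H]$, which your orbit-size argument has already established.
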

\begin{proof}
As $H \leq G$ has index $2$, it is normal. Fix $s \in G \setminus H$. Define $x \in A^G$ by
\[ (g)x = \begin{cases}
0 & \text{if } g \in H \\
1 & \text{if } g \in sH = Hs.
\end{cases} \]
Clearly $G_x = H$ and $x \in B_{[H]}$.

Suppose first that $A = \{0,1 \}$. Let $y \in B_{[H]}$. As $H$ is normal, $[H] = \{ H\}$, so $G_y = H$. For any $h \in H$,
\[ (h)y = (e)y \cdot h^{-1} = (e)y \text{ and } (sh) y = (s) y \cdot h^{-1} = (s)y,  \]
so $y$ is constant on the cosets $H$ and $sH = Hs$. Therefore, either $y = x$, or
\[ (g)y = \begin{cases}
1 & \text{if } g \in H \\
0 & \text{if } g \in sH = Hs.
\end{cases} \]     
In the latter case, $y \cdot s = x$ and $y \in xG$. This shows that there is a unique $G$-orbit contained in $B_{[H]}$, so $\alpha_{[H]}(G;A) = 1$. 

If $\vert A \vert \geq 3$, we may use a similar argument as above, except that now $y \in B_{[H]}$ may satisfy $(g)y \in A \setminus \{ 0,1\}$ for all $g \in H$, so $y \not \in xG$ and $\alpha_{[H]}(G;A) \geq 2$.\qed
\end{proof}

\begin{lemma}
Let $G$ be a finite group and $A$ a finite set of size $q\geq 2$. Suppose there is $H \leq G$ such that $\alpha_{[H]}(G;A) = 1$. Then, $q \mid [G:H] = \frac{\vert G \vert}{\vert H \vert}$.
\end{lemma}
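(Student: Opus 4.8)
The plan is to produce an auxiliary permutation of $A^G$ that preserves the block $B_{[H]}$ and acts on it freely, which forces $q$ to divide $\vert B_{[H]}\vert$; since $\alpha_{[H]}(G;A)=1$ makes $B_{[H]}$ a single $G$-orbit of size $[G:H]$, the divisibility $q\mid[G:H]$ follows at once.

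Concretely, I would identify $A$ with $\mathbb{Z}_q$ and define $\theta\colon A^G\to A^G$ by $(g)(x\theta):=(g)x+1\pmod q$ for all $g\in G$. Then $\theta$ is a bijection of order $q$, and it commutes with the $G$-action, i.e. $(x\cdot g)\theta=(x\theta)\cdot g$ for all $g\in G$ and $x\in A^G$; this is a one-line check from the definitions of $x\cdot g$ and $\theta$. (In fact $\theta\in\CA(G;A)$, but this is not needed.) I would then record two consequences. First, $\langle\theta\rangle\cong\mathbb{Z}_q$ acts \emph{freely} on $A^G$: if $x\theta^k=x$ with $0\le k<q$, then $(g)x+k\equiv(g)x\pmod q$ for every $g\in G$, so $q\mid k$, hence $k=0$; therefore every $\langle\theta\rangle$-orbit on $A^G$ has size exactly $q$. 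Second, $G_{x\theta}=G_x$ for every $x$, because $\theta$ is injective and commutes with the $G$-action: $g\in G_{x\theta}\iff(x\cdot g)\theta=x\theta\iff x\cdot g=x\iff g\in G_x$. In particular $B_{[H]}=\{x\in A^G:G_x\in[H]\}$ is $\theta$-invariant, hence a union of $\langle\theta\rangle$-orbits, so $q\mid\vert B_{[H]}\vert$.

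It remains to evaluate $\vert B_{[H]}\vert$ the other way. By Notation~\ref{alpha-notation}, $B_{[H]}$ is a disjoint union of $\alpha_{[H]}(G;A)$ $G$-orbits, and each such orbit, containing some $x$ with $G_x$ conjugate to $H$, has size $[G:G_x]=[G:H]$ since conjugate subgroups have equal index. Thus $\vert B_{[H]}\vert=\alpha_{[H]}(G;A)\cdot[G:H]$, which equals $[G:H]$ under the hypothesis $\alpha_{[H]}(G;A)=1$. Combined with $q\mid\vert B_{[H]}\vert$, this gives $q\mid[G:H]$, as required.

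I do not expect a genuine obstacle: the argument is short once one thinks of the coordinatewise $q$-cycle. The only points needing a little care are the three small verifications about $\theta$ — that it commutes with the $G$-action, that it leaves $B_{[H]}$ invariant (equivalently that $G_{x\theta}=G_x$), and that it fixes no configuration of period strictly less than $q$ — all of which are immediate from the definitions.
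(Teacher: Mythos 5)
Your proof is correct, but it takes a genuinely different route from the paper's. The paper fixes the single configuration $x$ with $B_{[H]}=xG$ and analyses its fibres: it first shows $x:G\to A$ must be surjective, then that all fibres $(a)x^{-1}$ have equal size (both by constructing a configuration $y$ with $G_y=G_x$ but $y\notin xG$, contradicting $\alpha_{[H]}(G;A)=1$), and finally observes that each fibre is a union of left cosets of $H$, so the $[G:H]$ cosets are split evenly into $q$ fibres. You instead exhibit a global symmetry: the coordinatewise shift $\theta$ (add $1$ mod $q$ everywhere), which is a bijection of order $q$ acting freely on $A^G$, commutes with the $G$-action, and preserves stabilisers, hence preserves each block $B_{[H]}$; counting $\langle\theta\rangle$-orbits gives $q\mid\vert B_{[H]}\vert=\alpha_{[H]}(G;A)\cdot[G:H]$. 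All three verifications you flag do go through as claimed. Your argument is shorter and in fact proves the stronger, hypothesis-free statement $q\mid\alpha_{[H]}(G;A)\cdot[G:H]$ for every conjugacy class $[H]$, which immediately yields the lemma when $\alpha_{[H]}(G;A)=1$ and could be of independent use (e.g.\ for the necessary conditions on $\alpha_{[H]}(G;A)=1$ discussed around Theorem~\ref{th:relative rank}). What the paper's fibre analysis buys, and yours does not, is structural information about the configuration $x$ itself --- that it is surjective onto $A$ with balanced fibres, each a union of cosets of $H$ --- which is a sharper necessary condition on when a block can be a single orbit.
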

\begin{proof}
Let $x \in B_{[H]}$ be such that $G_x = H$. As $\alpha_{[H]}(G;A) = 1$, $B_{[H]}= xG$. First we show that $x : G \to A$ is surjective. If $(G)x \subset A$, let $a \in (G)x$ and $b \in A \setminus (G)x$. Define $y \in A^G$ by
\[ (g)y := \begin{cases}
b & \text{ if } (g)x = a \\
(g)x & \text{ otherwise.}
\end{cases}\]
Then $y \in B_{[H]}$, as $G_y = G_x$, but $y \not \in xG$, which is a contradiction. For $a \in A$, let $(a) x^{-1} := \{ g \in G : (g)x = a \}$. Now we show that, for any $a, b \in A$,
\[ \vert (a)x^{-1} \vert = \vert (b)x^{-1} \vert. \]  
Suppose that $\vert (a)x^{-1} \vert < \vert (b)x^{-1} \vert$. Define $z \in A^G$ by
\[ (g)z := \begin{cases}
b & \text{ if } (g)x = a \\
a & \text{ if } (g)x = b \\
(g)x & \text{ otherwise.}
\end{cases}\]
Again, $z \in  B_{[H]}$, as $G_z = G_x$, but $z \not \in xG$, which is a contradiction. As $x$ is constant on the left cosets of $H$ in $G$, for each $a \in A$, $(a)x^{-1}$ is a union of left cosets. All cosets have the same size, so $(a)x^{-1}$ and $(b)x^{-1}$ contain the same number of them, for any $a,b \in A$. Therefore, $q \mid [G:H ]$. \qed      
\end{proof}
\begin{corollary}
Let $G$ be a finite abelian group and $A$ a finite set of size $q\geq 2$ such that $q \nmid \vert G \vert$. With the notation of Theorem \ref{th:relative rank},
\[\Rank(\CA(G;A):\ICA(G;A)) =  \vert \mathcal{E}_G \vert.  \]
\end{corollary}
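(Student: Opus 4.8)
The plan is to read this off directly from Theorem~\ref{th:relative rank}. That theorem gives
\[ \Rank(\CA(G;A):\ICA(G;A)) = \vert \mathcal{E}_G \vert - \sum_{i=2}^r \delta(\alpha_i,1), \]
where $H_1 = G, H_2, \dots, H_r$ is the list of subgroups of $G$ and $\alpha_i = \alpha_{[H_i]}(G;A)$. So it suffices to prove that the correction term vanishes, i.e.\ that $\delta(\alpha_i,1) = 0$ for every $i$ with $2 \leq i \leq r$; equivalently, that $\alpha_{[H]}(G;A) \neq 1$ for every subgroup $H \leq G$.

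First I would invoke the preceding lemma, which states that $\alpha_{[H]}(G;A) = 1$ forces $q \mid [G:H] = \vert G \vert / \vert H \vert$. Now if $H$ is any subgroup of $G$, then $[G:H]$ is a divisor of $\vert G \vert$; since by hypothesis $q \nmid \vert G \vert$, we conclude $q \nmid [G:H]$. By the contrapositive of the lemma, $\alpha_{[H]}(G;A) \neq 1$. (In fact $\alpha_{[H]}(G;A) \geq 1$ always holds by Remark~\ref{rk:subgroups}, so one even gets $\alpha_{[H]}(G;A) \geq 2$, but only $\neq 1$ is needed.) Applying this to each $H_i$ with $2 \leq i \leq r$ yields $\sum_{i=2}^r \delta(\alpha_i,1) = 0$, and the claimed equality follows.

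There is essentially no obstacle here: the content is entirely packaged into the two lemmas that precede the corollary, and the hypothesis $q \nmid \vert G \vert$ is exactly what is needed to force $q \nmid [G:H_i]$ for all $i$ via divisibility of indices. The only mild point to note is that the index $i = 1$ (corresponding to $H_1 = G$, with $\alpha_{[G]}(G;A) = q$) is already excluded from the sum in Theorem~\ref{th:relative rank}, so the value of $\alpha_{[G]}$ never enters the computation.
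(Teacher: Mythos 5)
Your proof is correct and is exactly the argument the paper intends (the corollary is stated without an explicit proof, as an immediate consequence of Theorem~\ref{th:relative rank} together with the preceding lemma): since $[G:H_i]$ divides $\vert G\vert$ and $q \nmid \vert G\vert$, the lemma forces $\alpha_i \neq 1$ for all $i$, so the correction term vanishes.
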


\subsubsection*{Acknowledgments.} This work was supported by the EPSRC grant EP/K033956/1.

\end{document}